\newcommand{\kleene}{^{\textstyle *}}
\renewcommand{\epsilon}{\varepsilon}
\bmdefine{\seqarrow}{\rightarrow}
\newcommand{\bs}{\backslash}
\newcommand{\varbox}{\Box^{\downarrow}}
\newcommand{\Typ}{\operatorname{Tp}}
\newcommand{\Pri}{\mathrm{Pr}}
\newcommand{\concat}{\:}
\newcommand{\contexthole}{\scalebox{0.7}{\ensuremath{\blacksquare}}}
\newcommand{\den}[1]{\llbracket #1 \rrbracket}
\newcommand{\selected}[1]{\,\fbox{\ensuremath{#1}}\,}
\newcommand{\qedhere}{\tag*{\qed}}
\title{On the Recognizing Power of the Lambek Calculus with Brackets}
\author{Makoto Kanazawa}
\institute{M. Kanazawa \at Department of Advanced Sciences, Faculty of Science and Engineering, Hosei University, 3--7--2 Kajino-cho, Koganei-shi, Tokyo 184--8584, Japan\\
\email{kanazawa@hosei.ac.jp}}
\begin{document}
\maketitle

\begin{abstract}
Every language recognized by the Lambek calculus with brackets is context-free.
This is shown by combining an observation by J\"ager with an entirely straightforward adaptation of the method Pentus used for the original Lambek calculus.
The case of the variant of the calculus allowing sequents with empty antecedents is slightly more complicated, requiring a restricted use of the multiplicative unit.
\keywords{Lambek calculus with brackets \and recognizing power}
\end{abstract}

\section{Introduction}

The calculus $\mathbf{L}\Diamond$, an enrichment of the Lambek calculus with brackets and associated residuation modalities, was introduced by \cite{Moortgat:1996}.
It is a kind of controlled mixture of the original Lambek calculus $\mathbf{L}$ \citep{Lambek:1958} and its nonassociative variant $\mathbf{NL}$ \citep{Lambek:1961}.
The question of its recognizing power was studied by \cite{Jaeger:2003}.
In terms of a natural definition of recognition he called ``$t$-recognition'', \cite{Jaeger:2003} put forward a proof that $\mathbf{L}\Diamond$ recognizes only context-free languages.
As pointed out by \cite{Kanovich-et-al:2017}, however, J\"ager's proof was flawed since it rested on the assumption that Versmissen's \citeyearpar{Versmissen:1996} translation from types of $\mathbf{L}\Diamond$ into types of $\mathbf{L}$ was a faithful embedding, which \cite{Fadda-and-Morrill:2005} showed not to be the case.
This paper provides a correct proof of context-freeness of $\mathbf{L}\Diamond$ as well as of the variant $\mathbf{L}\kleene\Diamond$ allowing empty antecedents.

\section{The Calculus $\mathbf{L}\Diamond$}

Let $\Pri = \{ p_1,p_2,p_3,\dots \}$ be an infinite supply of \emph{primitive types}.
If $\mathcal{B}$ is some set, we let $\Typ(\mathcal{B})$ denote the smallest superset of $\mathcal{B}$ such that $A,B \in \Typ(\mathcal{B})$ implies $A \bs B, B / A, A \bullet B, \Diamond A, \varbox A \in \Typ(\mathcal{B})$.
An element of $\Typ(\Pri)$ is called a \emph{type}.
We let upper-case letters $A,B,C,\dots$ range over types.
The \emph{length} $||A||$ of a type $A$ is defined by
\begin{gather*}
||p|| = 1\quad\text{if $p$ is a primitive type,}\\
||A \bs B|| = ||B / A|| = ||A \bullet B|| = ||A||+||B||,\\
||\Diamond A|| = ||\varbox A|| = ||A|| + 2.
\end{gather*}

A \emph{type tree} is either a single node labeled by a type or a tree with an unlabeled root all of whose immediate subtrees are type trees.
A \emph{type hedge} is a finite sequence of type trees, which is written without commas between trees.
Following \cite{Jaeger:2003}, we use angle brackets $\langle, \rangle$ to denote type trees and type hedges.
A simultaneous inductive definition of type trees and type hedges go as follows:
\begin{itemize}
\item
If $A$ is a type, then $A$ is a type tree.
\item
If $\Gamma$ is a type hedge, then $\langle \Gamma \rangle$ is a type tree.
\item
If $T_1,\dots,T_n$ ($n \geq 0$) are type trees, then $T_1 \dots T_n$ is a type hedge.
\end{itemize}
When $n=0$ in the last clause, the type hedge $T_1 \dots T_n$ is called \emph{empty}.
Note that every type tree is a type hedge.
We use upper-case Greek letters $\Pi, \Gamma, \Delta,\dots$ to denote type hedges.
If $\Pi$ and $\Gamma$ are type hedges, then $\Pi \concat \Gamma$ denotes the type hedge that is their concatenation.
The \emph{yield} of a type hedge $\Gamma$ is the string of types that label the leaves of $\Gamma$---in other words, the yield of $\Gamma$ is the result of removing all angle brackets from $\Gamma$.

A \emph{sequent} is an expression of the form
\[
\Gamma \seqarrow A
\]
where $\Gamma$ is a type hedge and $A$ is a type; $\Gamma$ is its \emph{antecedent} and $A$ its \emph{succedent}.

A \emph{context} is just like a type hedge, except that a special symbol $\contexthole$ labels exactly one leaf; all other labels are types.
A context is denoted by $\Pi[\contexthole], \Gamma[\contexthole], \Delta[\contexthole]$, etc.
If $\Gamma[\contexthole]$ is a context and $\Delta$ is a type hedge, then $\Gamma[\Delta]$ denotes the type hedge which is the result of replacing the unique leaf labeled by $\contexthole$ in $\Gamma$ by the hedge $\Delta$; in $\Gamma[\Delta]$, the siblings of $\contexthole$ in $\Gamma[\contexthole]$ become the siblings of the roots of the trees that make up $\Delta$.
A precise inductive definition goes as follows:
\begin{itemize}
\item
If $\Gamma[\contexthole]$ is a single node labeled by $\contexthole$, then $\Gamma[\Delta] = \Delta$.
\item
If $\Gamma[\contexthole] = \Pi_1 \concat T[\contexthole] \concat \Pi_2$, then $\Gamma[\Delta] = \Pi_1 \concat T[\Delta] \concat \Pi_2$.
\item
If $\Gamma[\contexthole] = \langle \Pi[\contexthole] \rangle$, then $\Gamma[\Delta] = \langle \Pi[\Delta] \rangle$.
\end{itemize}

The sequent calculus $\mathbf{L}\Diamond$ has the following rules of inference:
\begin{gather*}
\infer[(\bs{\seqarrow})]{\Delta[\Gamma \concat A \bs B] \seqarrow C}{
	\Gamma \seqarrow A
	&
	\Delta[B] \seqarrow C
}
\qquad
\infer[({\seqarrow}\bs)]{\Pi \seqarrow A \bs B}{
A \concat \Pi \seqarrow B
}
\\
\infer[(/{\seqarrow})]{\Delta[B/A \concat \Gamma] \seqarrow C}{
	\Gamma \seqarrow A
	&
	\Delta[B] \seqarrow C
}
\qquad
\infer[({\seqarrow}/)]{\Pi \seqarrow B/A}{
\Pi \concat A \seqarrow B
}
\\
\infer[(\bullet{\seqarrow})]{\Gamma[A \bullet B] \seqarrow C}{
	\Gamma[A \concat B] \seqarrow C
}
\qquad
\infer[({\seqarrow}\bullet)]{\Gamma \concat \Delta \seqarrow A \bullet B}{
	\Gamma \seqarrow A
	&
	\Delta \seqarrow B
}
\\
\infer[(\Diamond{\seqarrow})]{\Gamma[\Diamond A] \seqarrow B}{
	\Gamma[\langle A \rangle] \seqarrow B
}
\qquad
\infer[({\seqarrow}\Diamond)]{\langle \Gamma \rangle \seqarrow \Diamond A}{
	\Gamma \seqarrow A
}
\\
\infer[(\varbox{\seqarrow})]{\Gamma[\langle \varbox A \rangle] \seqarrow B}{
	\Gamma[A] \seqarrow B
}
\qquad
\infer[({\seqarrow}\varbox)]{\Gamma \seqarrow \varbox A}{
	\langle \Gamma \rangle \seqarrow A
}
\\
\infer[\text{Cut}]{\Delta[\Gamma] \seqarrow B}{
	\Gamma \seqarrow A
	&
	\Delta[A] \seqarrow B
}
\end{gather*}
In $({\seqarrow}\bs)$ and $({\seqarrow}/)$, the hedge $\Pi$ should not be empty.
An \emph{initial sequent} is a sequent of the form $p_i \seqarrow p_i$.%
\footnote{Equivalently, we may take all sequents of the form $A \seqarrow A$ as initial sequents, as \cite{Jaeger:2003} did.}
A sequent is \emph{provable} if it can be derived from initial sequents using rules of inference.
We write $\vdash_{\mathbf{L}\Diamond} \Gamma \seqarrow C$ if $\Gamma \seqarrow C$ is provable in $\mathbf{L}\Diamond$.
The cut rule is eliminable \citep{Moortgat:1996}, so every provable sequent has a cut-free proof.

Since the type hedge $\Pi$ is required to be nonempty in the rules $({\seqarrow}\bs)$ and $({\seqarrow}/)$ of $\mathbf{L}\Diamond$, the antecedent of a provable sequent is never empty, and $\langle \rangle$ (a matching pair of angle brackets with nothing in between) cannot appear in the antecedent of a provable sequent.
As in the case of the original Lambek calculus, the calculus without this restriction, referred to as $\mathbf{L}\kleene\Diamond$, may also be of interest.
We will discuss $\mathbf{L}\kleene\Diamond$ in Section~\ref{sec:L*b}.

An \emph{$\mathbf{L}\Diamond$ grammar} is a triple $G = (\Sigma,I,D)$, where $\Sigma$ is a finite alphabet, $I$ is a finite subset of $\Sigma \times \Typ(\Pri)$, and $D$ is a type.
A string $w = a_1 \dots a_n$ of length $n \geq 0$ is \emph{generated by} $G$ if there is a provable sequent $\Gamma \seqarrow D$ such that the yield of $\Gamma$ is $A_1 \dots A_n$ and for each $i=1,\dots,n$, $(a_i,A_i) \in I$.
We write $L(G)$ for the set $\{\, w \in \Sigma\kleene \mid \text{$G$ generates $w$} \,\}$.
A language generated by some $\mathbf{L}\Diamond$ grammar is said to be \emph{recognized by} $\mathbf{L}\Diamond$.%
\footnote{This is one of the two notions of recognition studied by \cite{Jaeger:2003}; he called this notion \emph{t-recognition}.}
Since the antecedent of a provable sequent is never empty and never contains $\langle \rangle$, $\mathbf{L}\Diamond$ only recognizes languages consisting of nonempty strings (\emph{$\epsilon$-free} languages).

\cite{Jaeger:2003} claimed that $\mathbf{L}\Diamond$ recognizes exactly the ($\epsilon$-free) context-free languages.
His proposed proof relied on the following translation from types of $\mathbf{L}\Diamond$ to types of the original Lambek calculus $\mathbf{L}$ due to \cite{Versmissen:1996}:
\begin{align*}
p^{\flat} & = p,\\
(A \bs B)^{\flat} & = A^{\flat} \bs B^{\flat},\\
(B / A)^{\flat} & = B^{\flat} / A^{\flat},\\
(A \bullet B)^{\flat} & = A^{\flat} \bullet B^{\flat},\\
(\Diamond A)^{\flat} & = \textsf{m} \bullet A^{\flat} \bullet \textsf{n},\\
(\varbox A)^{\flat} & = \textsf{m} \bs A^{\flat} / \textsf{n},
\end{align*}
where $\textsf{m}$ and $\textsf{n}$ are new primitive types.
As pointed out by \cite{Fadda-and-Morrill:2005}, however, Versmissen's translation is not a faithful embedding in the sense that there is a sequent $A_1 \dots A_n \seqarrow B$ which is not provable in $\mathbf{L}\Diamond$ but whose translation, $A_1^{\flat} \dots A_n^{\flat} \seqarrow B^{\flat}$, is provable in $\mathbf{L}$.%
\footnote{An example (adapted from \cite{Fadda-and-Morrill:2005}) is $\Diamond \varbox p \concat \Diamond \varbox q \seqarrow \Diamond \varbox (p \bullet q)$.}
Consequently, J\"ager's proof does not go through.
According to \cite{Kanovich-et-al:2017}, it has remained an open question whether $\mathbf{L}\Diamond$ recognizes exactly the ($\epsilon$-free) context-free languages.%
\footnote{To be precise, \cite{Kanovich-et-al:2017} were speaking of $\mathbf{L}\kleene\Diamond$ rather than $\mathbf{L}\Diamond$.
\cite{Jaeger:2003} was dealing with $\mathbf{L}\Diamond$ rather than $\mathbf{L}\kleene\Diamond$, although he did not make it entirely clear.}

Fortunately, it is not necessary to rely on the faithfulness of Versmissen's translation to prove J\"ager's claim.
As we see below, a straightforward adaptation of the method from \cite{Pentus:1993,Pentus:1997} can be used to establish J\"ager's claim.

There are three main ingredients to Pentus's \citeyearpar{Pentus:1993,Pentus:1997} proof:
\begin{itemize}
\item
interpolation theorem for $\mathbf{L}$ (originally proved by \cite{Roorda:1991} for $\mathbf{L}\kleene$, the Lambek calculus allowing empty antecedents)
\item
soundness of the free group interpretation
\item
little lemma about free groups
\end{itemize}

We need the extension of the first two ingredients to the case of $\mathbf{L}\Diamond$.
An interpolation theorem for $\mathbf{L}\Diamond$ was proved by \cite{Jaeger:2003}.
The required free group interpretation for $\mathbf{L}\Diamond$ can be obtained through Versmissen's \citeyearpar{Versmissen:1996} translation; the faithfulness of the translation is not necessary.

In order to make use of his lemma about free groups, \cite{Pentus:1993,Pentus:1997} relied on the notion of a \emph{thin sequent}.
This is not essential; if we use links connecting positive and negative occurrences of primitive types instead of the free group interpretation, we can avoid the notion of a thin sequent.%
\footnote{See \cite{Kanazawa:2006} for a statement of an interpolation theorem for the implicational fragment of intuitionistic logic in terms of these links.}
Similar links that also connect occurrence of brackets and modalities can be used to reason about $\mathbf{L}\Diamond$ as well.
Nevertheless, both because of its convenience and because it allows us to stay close to Pentus's \citeyearpar{Pentus:1993,Pentus:1997} proof, we introduce a notion of a thin sequent appropriate for $\mathbf{L}\Diamond$.
In order to do this, we have to extend the language and use brackets and modalities indexed by positive integers.

\section{The Multimodal Calculus $\mathbf{L}\Diamond_{\mathrm{m}}$}
\label{sec:multimodal}

We use brackets and modalities indexed by positive integers: $\langle_i, \rangle_i, \Diamond_i, \varbox_i$.
We write $\Typ_{\mathrm{m}}(\mathcal{B})$ for the smallest superset of $\mathcal{B}$ such that $A,B \in \Typ_{\mathrm{m}}(\mathcal{B})$ implies $A \bs B, B/A, A \bullet B, \Diamond_i A, \varbox_i A \in \Typ_{\mathrm{m}}(\mathcal{B})$ for each $i \geq 1$.
Elements of $\Typ_{\mathrm{m}}(\Pri)$ are called \emph{indexed types}.
The length $||A||$ of an indexed type $A$ is defined as before, where we add two for each occurrence of an indexed modality.

\emph{Indexed type trees} and \emph{indexed type hedges} are defined by induction as follows:
\begin{itemize}
\item
If $A$ is an indexed type, then $A$ is an indexed type tree.
\item
If $T_1,\dots,T_n$ ($n \geq 1$) are indexed type trees, then $T_1 \dots T_n$ is an indexed type hedge.
\item
If $\Gamma$ is an indexed type hedge, then $\langle_i \Gamma \rangle_i$ is an indexed type tree for any positive integer $i$.
\end{itemize}
% The length of an indexed type hedge $\Gamma$ is the sum of the lengths of the types in it plus the number of occurrences of brackets, counting $\langle_i$ and $\rangle_i$ separately.

The rules of the indexed variant $\mathbf{L}\Diamond_{\mathrm{m}}$ of $\mathbf{L}\Diamond$ are the same as those of $\mathbf{L}\Diamond$ except that the rules for the modalities are replaced by the following:
\begin{gather*}
\infer[(\Diamond_i{\seqarrow})]{\Gamma[\Diamond_i A] \seqarrow C}{
	\Gamma[\langle_i A \rangle_i] \seqarrow C
}
\qquad
\infer[({\seqarrow}\Diamond_i)]{\langle_i \Gamma \rangle_i \seqarrow \Diamond_i A}{
	\Gamma \seqarrow A
}
\\
\infer[(\varbox_i{\seqarrow})]{\Gamma[\langle_i \varbox_i A \rangle_i] \seqarrow C}{
	\Gamma[A] \seqarrow C
}
\qquad
\infer[({\seqarrow}\varbox_i)]{\Gamma \seqarrow \varbox_i A}{
	\langle_i \Gamma \rangle_i \seqarrow A
}
\end{gather*}

This calculus was presented briefly by \cite{Moortgat:1996} as a straightforward ``multimodal generalization'' of $\mathbf{L}\Diamond$.
Again, the cut rule is eliminable.

We interpret indexed types and type hedges as elements of the free group generated by $\Pri \cup \{\, {\langle}_i \mid i \geq 1 \,\} \cup \{\, {\rangle}_i \mid i \geq 1 \,\}$:
\begin{align*}
\den{p_i} & = p_i,\\
\den{A \bs B} & = \den{A}^{-1} \, \den{B},\\
\den{B / A} & = \den{B} \, \den{A}^{-1},\\
\den{A \bullet B} & = \den{A} \, \den{B},\\
\den{\Diamond_i A} & = {\langle}_i \, \den{A} \, {\rangle}_i,\\
\den{\varbox_i A} & = {\langle}_i^{-1} \, \den{A} \, {\rangle}_i^{-1},\\
\den{T_1 \dots T_n} & = \den{T_1} \dots \den{T_n},\\
\den{\langle_i \Gamma \rangle_i} & = {\langle}_i \, \den{\Gamma} \, {\rangle}_i.
\end{align*}

\begin{lemma}
\label{lem:free-group}
If\/ $\vdash_{\mathbf{L}\Diamond_{\mathrm{m}}} \Gamma \seqarrow C$, then\/ $\den{\Gamma} = \den{C}$.
\end{lemma}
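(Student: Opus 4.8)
The plan is to proceed by induction on the length of a cut-free derivation of $\Gamma \seqarrow C$ in $\mathbf{L}\Diamond_{\mathrm{m}}$; since the cut rule is eliminable, this suffices. For the base case, an initial sequent $p_i \seqarrow p_i$ trivially satisfies $\den{p_i} = p_i = \den{p_i}$. For the inductive step, I would check each rule of inference in turn, verifying that whenever the premises satisfy the free-group identity, so does the conclusion. The key observation that makes this routine is that the free-group interpretation is defined \emph{compositionally}: the interpretation of a type hedge (or context) is built from the interpretations of its immediate constituents by multiplication in the free group and by conjugation-like insertions of the bracket generators ${\langle}_i, {\rangle}_i$. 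In particular, for a context $\Delta[\contexthole]$ there exist fixed group elements $u, v$ (depending only on $\Delta[\contexthole]$) such that $\den{\Delta[\Phi]} = u\,\den{\Phi}\,v$ for every indexed type hedge $\Phi$; this is proved by a trivial side induction on the structure of the context, following the three clauses of its inductive definition.

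Granting that substitution lemma, each rule becomes a one-line equation in the free group. For instance, in $(\bs{\seqarrow})$ the premises give $\den{\Gamma} = \den{A}$ and $\den{\Delta[B]} = \den{C}$, i.e.\ $u\,\den{B}\,v = \den{C}$; then $\den{\Delta[\Gamma \concat A \bs B]} = u\,\den{\Gamma}\,\den{A}^{-1}\,\den{B}\,v = u\,\den{A}\,\den{A}^{-1}\,\den{B}\,v = u\,\den{B}\,v = \den{C}$. The rules $(/{\seqarrow})$, $(\bullet{\seqarrow})$, $({\seqarrow}\bullet)$, $({\seqarrow}\bs)$, $({\seqarrow}/)$ are handled the same way, using that $\den{A \bs B} = \den{A}^{-1}\den{B}$ absorbs a leading $\den{A}$ and $\den{B/A} = \den{B}\den{A}^{-1}$ absorbs a trailing $\den{A}$. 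For the modal rules, the point is simply that the interpretation was rigged so that $\den{\Diamond_i A} = {\langle}_i\,\den{A}\,{\rangle}_i = \den{\langle_i A \rangle_i}$, so $(\Diamond_i{\seqarrow})$ and $({\seqarrow}\Diamond_i)$ preserve the denotation of the relevant subterm on the nose (combined with the substitution lemma for the context in $(\Diamond_i{\seqarrow})$); and $\den{\varbox_i A} = {\langle}_i^{-1}\,\den{A}\,{\rangle}_i^{-1}$ is exactly the inverse-bracket pattern that cancels against the ${\langle}_i, {\rangle}_i$ wrapped around it, so that $\den{\langle_i \varbox_i A \rangle_i} = {\langle}_i\,{\langle}_i^{-1}\,\den{A}\,{\rangle}_i^{-1}\,{\rangle}_i = \den{A}$, matching the premise of $(\varbox_i{\seqarrow})$, while $({\seqarrow}\varbox_i)$ reads off $\den{\langle_i \Gamma \rangle_i} = {\langle}_i\,\den{\Gamma}\,{\rangle}_i = {\langle}_i\,\den{A}\,{\rangle}_i = \den{\varbox_i A}$ from its premise $\den{\langle_i \Gamma \rangle_i} = \den{A}$, after multiplying both sides by ${\langle}_i$ on the left and ${\rangle}_i$ on the right.

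I do not expect any real obstacle here: the lemma is essentially a soundness statement for a syntactic model, and the free-group equations are engineered precisely so that every rule is validated. The only point requiring a modicum of care is bookkeeping around contexts — making sure the substitution identity $\den{\Delta[\Phi]} = u\,\den{\Phi}\,v$ is stated and used uniformly in the rules $(\bs{\seqarrow})$, $(/{\seqarrow})$, $(\bullet{\seqarrow})$, $(\Diamond_i{\seqarrow})$, and $(\varbox_i{\seqarrow})$ (and, if one includes it, Cut), where the active type is modified inside an ambient context. Once that is in place, the proof is a mechanical case check and can be compressed to a representative case or two.
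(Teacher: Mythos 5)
Your proposal is correct and takes essentially the same approach as the paper, whose entire proof is the one line ``straightforward induction on the cut-free proof''; the context substitution identity $\den{\Delta[\Phi]} = u\,\den{\Phi}\,v$ is exactly the bookkeeping needed to make each rule a one-line group equation. (One displayed chain in your $({\seqarrow}\varbox_i)$ case is garbled---the premise gives $\langle_i\,\den{\Gamma}\,\rangle_i = \den{A}$, hence $\den{\Gamma} = \langle_i^{-1}\,\den{A}\,\rangle_i^{-1} = \den{\varbox_i A}$---but your verbal description of the manipulation is the right one, so this is only a slip of transcription.)
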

\begin{proof}
Straightforward induction on the cut-free proof of $\Gamma \seqarrow C$.
\qed
\end{proof}

As in \cite{Pentus:1993,Pentus:1997}, we write $\sigma_i(A)$, $\sigma_i(\Gamma)$, $\sigma_i(\Gamma \seqarrow C)$, etc., for the number of occurrences of $p_i$ in $A$, $\Gamma$, $\Gamma \seqarrow C$, etc.
We let $\tau_i(A)$, $\tau_i(\Gamma)$, $\tau_i(\Gamma \seqarrow C)$, etc., denote the total number of occurrences of $\langle_i, \Diamond_i, \varbox_i$ in $A$, $\Gamma$, $\Gamma \seqarrow C$, etc.
(Note that since $\langle_i$ always occurs paired with $\rangle_i$, the number of occurrences of $\langle_i$ in the antecedent of a sequent is the same as the number of occurrences of $\rangle_i$ in it.)
Evidently, we always have
\[
||A|| = \sum_i \sigma_i(A) + 2 \sum_i \tau_i(A).
\] 
An indexed sequent $\Gamma \seqarrow C$ is \emph{thin} if for each $i$, $\sigma_i(\Gamma \seqarrow C) \leq 2$ and $\tau_i(\Gamma \seqarrow C) \leq 2$.

A \emph{primitive type substitution} is a function $\theta\colon \Pri \rightarrow \Pri$.
A (non-indexed) sequent $\Gamma \seqarrow C$ is a \emph{substitution instance} of an indexed sequent $\Gamma' \seqarrow C'$ if for some primitive type substitution $\theta$, the former is obtained from the latter by uniformly replacing each $p_i$ by $\theta(p_i)$ and replacing each indexed bracket and indexed modality by the corresponding non-indexed variant.
For example, if $p$ is a primitive type, $\langle \langle p \rangle \concat \Diamond p \bs p \rangle \seqarrow \varbox \Diamond \Diamond p$ is a substitution instance of $\langle_2 \langle_1 p_1 \rangle_1 \concat \Diamond_1 p_1 \bs p_2 \rangle_2 \seqarrow \varbox_3 \Diamond_3 \Diamond_2 p_2$.
This example illustrates the following lemma:
\begin{lemma}
If\/ $\vdash_{\mathbf{L}\Diamond} \Gamma \seqarrow C$, then\/ $\Gamma \seqarrow C$ is a substitution instance of some thin indexed sequent\/ $\Gamma' \seqarrow C'$ such that\/ $\vdash_{\mathbf{L}\Diamond_{\mathrm{m}}} \Gamma' \seqarrow C'$.
\end{lemma}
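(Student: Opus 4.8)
The plan is to fix a cut-free proof of $\Gamma\seqarrow C$ in $\mathbf{L}\Diamond$, which exists since cut is eliminable, and to argue by induction on it, attaching to each node of the proof a thin indexed sequent, an $\mathbf{L}\Diamond_{\mathrm{m}}$-proof of that sequent, and a primitive type substitution $\theta$ such that applying $\theta$ and erasing all indices on brackets and modalities turns the thin indexed sequent back into the $\mathbf{L}\Diamond$-sequent sitting at that node. For an initial sequent $p_j\seqarrow p_j$ one takes $p_k\seqarrow p_k$ with $\theta(p_k)=p_j$: this is again an initial sequent of $\mathbf{L}\Diamond_{\mathrm{m}}$, and it is thin because $\sigma_k=2$ and every $\tau_i=0$.

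Two elementary observations keep the inductive step uniform. First, applying a primitive type substitution and erasing bracket and modality indices never changes the tree shape of a hedge; so whenever the last rule displays its premise(s) with a distinguished part --- a sub-hedge, a bracketed tree $\langle A\rangle$, a leaf occurrence inside a context $\Delta[\contexthole]$, and so on --- the thin indexed pre-image furnished by the induction hypothesis may be assumed to display the corresponding \emph{indexed} part, and this is precisely what makes the matching $\mathbf{L}\Diamond_{\mathrm{m}}$-rule applicable. Second, because the rules of $\mathbf{L}\Diamond_{\mathrm{m}}$ are schematic in primitive types and in indices alike, an injective renaming of primitive types, or an injective renaming of indices, carries a thin $\mathbf{L}\Diamond_{\mathrm{m}}$-provable sequent to another one (with $\theta$ adjusted by the inverse renaming on primitive types), so we are free to rename within a pre-image whenever convenient.

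Granting these, the non-modal cases are routine. For each unary rule $({\seqarrow}\bs)$, $({\seqarrow}/)$, $(\bullet\seqarrow)$ one applies the homonymous $\mathbf{L}\Diamond_{\mathrm{m}}$-rule to the pre-image of the single premise; the multiset of primitive type occurrences and all the counts $\tau_i$ are unaffected, so thinness is preserved, and the nonemptiness side condition on $({\seqarrow}\bs)$ and $({\seqarrow}/)$ holds because no sequent occurring in a cut-free $\mathbf{L}\Diamond$-proof has an empty antecedent (nor contains $\langle\rangle$, so all indexed hedges written down are legitimate). For each binary rule $(\bs\seqarrow)$, $(/\seqarrow)$, $({\seqarrow}\bullet)$ one first renames primitive types and indices in one of the two pre-images so that the pre-images share no primitive type and no index, and then applies the corresponding $\mathbf{L}\Diamond_{\mathrm{m}}$-rule; in the result only one pre-image contributes occurrences of any given primitive type or index, so $\sigma_i\le 2$ and $\tau_i\le 2$ are inherited from the hypotheses.

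The modal rules are where the bound $\tau_i\le 2$ really has to be earned, and this is the step I expect to demand the most care. The two rules $({\seqarrow}\Diamond)$ and $(\varbox\seqarrow)$, read from premise to conclusion, \emph{create} a new bracket pair together with a new modality; for these I choose a single index $i$ not occurring at all in the pre-image of the premise and introduce \emph{both} new items with that very index --- passing from $\Gamma'\seqarrow A'$ to $\langle_i\Gamma'\rangle_i\seqarrow\Diamond_i A'$ via $({\seqarrow}\Diamond_i)$, and from $\Gamma'[A']\seqarrow B'$ to $\Gamma'[\langle_i\varbox_i A'\rangle_i]\seqarrow B'$ via $(\varbox_i\seqarrow)$ --- the point being that the new $\langle_i\dots\rangle_i$ pair and the new modality jointly contribute exactly $2$ to $\tau_i$ while $i$ contributed $0$ before, so $\tau_i$ becomes $2$ and no other count moves. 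The remaining rules $(\Diamond\seqarrow)$ and $({\seqarrow}\varbox)$ merely \emph{transform} a bracketed tree $\langle_i\cdots\rangle_i$ already displayed in the pre-image into $\Diamond_i\cdots$ (by $(\Diamond_i\seqarrow)$) or $\varbox_i\cdots$ (by $({\seqarrow}\varbox_i)$), which swaps one $\tau_i$-counted occurrence for another and so leaves $\tau_i$, already $\le 2$ by hypothesis, unchanged. Following the induction all the way down, the thin indexed sequent attached to the root, with its $\mathbf{L}\Diamond_{\mathrm{m}}$-proof and substitution $\theta$, is the desired $\Gamma'\seqarrow C'$.
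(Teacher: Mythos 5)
Your proposal is correct and is essentially the paper's own argument: the paper merely states the relabeling recipe---distinct primitive types for distinct initial sequents, distinct fresh indices for the bracket-creating rules $({\seqarrow}\Diamond)$ and $(\varbox{\seqarrow})$, with $(\Diamond{\seqarrow})$ and $({\seqarrow}\varbox)$ inheriting the index of the bracket they consume---whereas you spell the same recipe out as an induction on the cut-free proof, using renaming at binary rules to keep the two branches disjoint in primitive types and indices. The bookkeeping you give ($\sigma_i,\tau_i\le 2$ preserved at every step) is exactly what makes the paper's one-line construction work, so nothing further is needed.
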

Such a thin indexed sequent is obtained from the proof of the original sequent using distinct primitive types for distinct instances of initial sequents and using distinct indices for distinct instances of $({\seqarrow}\Diamond)$ and of $(\varbox{\seqarrow})$.
For example, the $\mathbf{L}\Diamond$ proof
\[
\infer[({\seqarrow}\varbox)]{\langle \langle p \rangle \concat \Diamond p \bs p \rangle \seqarrow \varbox \Diamond \Diamond p}{
	\infer[({\seqarrow}\Diamond)]{\langle \langle \langle p \rangle \concat \Diamond p \bs p \rangle \rangle \seqarrow \Diamond \Diamond p}{
		\infer[({\seqarrow}\Diamond)]{\langle \langle p \rangle \concat \Diamond p \bs p \rangle \seqarrow \Diamond p}{
			\infer[(\bs{\seqarrow})]{\langle p \rangle \concat \Diamond p \bs p \seqarrow p}{
				\infer[({\seqarrow}\Diamond)]{\langle p \rangle \seqarrow \Diamond p}{
					p \seqarrow p
				}
				&
				p \seqarrow p
			}
		}
	}
}
\]
yields the $\mathbf{L}\Diamond_{\mathrm{m}}$ proof
\[
\infer[({\seqarrow}\varbox_3)]{\langle_2 \langle_1 p_1 \rangle_1 \concat \Diamond_1 p_1 \bs p_2 \rangle_2 \seqarrow \varbox_3 \Diamond_3 \Diamond_2 p_2}{
	\infer[({\seqarrow}\Diamond_3)]{\langle_3 \langle_2 \langle_1 p_1 \rangle_1 \concat \Diamond_1 p_1 \bs p_2 \rangle_2 \rangle_3 \seqarrow \Diamond_3 \Diamond_2 p_2}{
		\infer[({\seqarrow}\Diamond_2)]{\langle_2 \langle_1 p_1 \rangle_1 \concat \Diamond_1 p_1 \bs p_2 \rangle_2 \seqarrow \Diamond_2 p_2}{
			\infer[(\bs{\seqarrow})]{\langle_1 p_1 \rangle_1 \concat \Diamond_1 p_1 \bs p_2 \seqarrow p_2}{
				\infer[({\seqarrow}\Diamond_1)]{\langle_1 p_1 \rangle_1 \seqarrow \Diamond_1 p_1}{
					p_1 \seqarrow p_1
				}
				&
				p_2 \seqarrow p_2
			}
		}
	}
}
\]

J\"ager's \citeyearpar{Jaeger:2003} proof of his interpolation theorem for $\mathbf{L}\Diamond$ can be repeated for $\mathbf{L}\Diamond_{\mathrm{m}}$ to give the following statement:
\begin{theorem}
\label{thm:interpolation}
If\/ $\vdash_{\mathbf{L}\Diamond_{\mathrm{m}}} \Gamma[\Delta] \seqarrow C$, where\/ $\Delta$ is a nonempty type hedge, then there is a type\/ $E$ such that
\begin{enumerate}
\item[\upshape (i)]
$\vdash_{\mathbf{L}\Diamond_{\mathrm{m}}} \Delta \seqarrow E$,
\item[\upshape (ii)]
$\vdash_{\mathbf{L}\Diamond_{\mathrm{m}}} \Gamma[E] \seqarrow C$,
\item[\upshape (iii)]
$\sigma_i(E) \leq \min(\sigma_i(\Delta),\sigma_i(\Gamma[\contexthole] \seqarrow C))$ for each\/ $i$,
\item[\upshape (iv)]
$\tau_i(E) \leq \min(\tau_i(\Delta),\tau_i(\Gamma[\contexthole] \seqarrow C))$ for each\/ $i$.
\end{enumerate}
\end{theorem}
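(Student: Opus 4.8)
The plan is to prove the statement by induction on a cut-free proof of $\Gamma[\Delta] \seqarrow C$ in $\mathbf{L}\Diamond_{\mathrm{m}}$ (cut is eliminable), essentially by repeating J\"ager's argument for $\mathbf{L}\Diamond$ while additionally keeping track of the indices and of the quantities $\tau_i$ alongside the $\sigma_i$. In the base case $\Gamma[\Delta] \seqarrow C$ is an initial sequent $p \seqarrow p$, which forces $\Gamma[\contexthole] = \contexthole$ and $\Delta = p$, so $E = p$ works. For the induction step one inspects the last rule $R$ and the position of $\Delta$ --- that is, of the contiguous block of sibling trees it fills --- relative to the principal formula of $R$ and to the antecedent material supplied by each premise. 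It is worth noting at the outset that a bracketed hedge $\langle_i \Phi \rangle_i$ is a single tree, so $\Delta$ can never contain an opening bracket without its matching closing bracket; bracketed hedges are thus ``atomic'' from $\Delta$'s point of view, which keeps the cases for the modality rules parallel to (indeed simpler than) those for $(\bs{\seqarrow})$ and $(/{\seqarrow})$.

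In each case there are two basic situations. (a) $\Delta$ corresponds to a subhedge of the antecedent of a single premise (possibly after a subformula of that premise's antecedent has been expanded into the principal formula of $R$ together with its side hedge, or a bracketed hedge has been re-bracketed): then one applies the induction hypothesis to that premise, keeps the resulting interpolant $E$, and re-derives $\Gamma[E] \seqarrow C$ by re-applying $R$. Conditions (i)--(ii) are immediate, and (iii)--(iv) hold because every occurrence of a primitive type, bracket or modality in the ``rest'' of the chosen premise also occurs in $\Gamma[\contexthole] \seqarrow C$ (the other premise's antecedent, the principal connective, and the succedent are all still present), so the premise's bound is at most the conclusion's. (b) $\Delta$ straddles the boundary between the contributions of two premises, or contains the principal formula of $R$ together with some of its siblings: then one splits $\Delta$'s block appropriately, applies the induction hypothesis to both premises (with suitably chosen distinguished subhedges) to obtain $E_1$ and $E_2$, and combines them into a single type using the connective matching the boundary --- $E = E_1 \bullet E_2$ for a plain horizontal split, $E = E_1 \bs E_2$ or $E = E_2 / E_1$ for a split running across a principal $\bs$ or $/$, and $E = \Diamond_i E_1$ (respectively $E = \varbox_i E_1$) when $\Delta$ equals or contains a bracketed hedge occurring in a $({\seqarrow}\Diamond_i)$ (respectively $(\varbox_i{\seqarrow})$) inference --- re-deriving (i) and (ii) by a few applications of the product, implication, and modality rules. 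Conditions (iii)--(iv) then follow from the subadditivity $\min(a_1,b_1) + \min(a_2,b_2) \le \min(a_1+a_2,\, b_1+b_2)$, using that $\sigma_i$ of $E_1 \bullet E_2$, of $E_1 \bs E_2$ and of $E_2 / E_1$ equals $\sigma_i(E_1) + \sigma_i(E_2)$, and that $\tau_i$ of $\Diamond_j E_1$ and of $\varbox_j E_1$ equals $\tau_i(E_1)$, plus one when $i = j$ --- exactly the increments incurred on the $\Delta$ side and on the $\Gamma[\contexthole] \seqarrow C$ side.

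I expect the only real difficulty to be the bookkeeping: the case analysis is long (each rule against each way $\Delta$ can overlap the newly created structure), and one must be careful to choose, in situation (b), exactly the right sub-block of $\Delta$ to hand to each premise, and to check the non-emptiness side conditions on $({\seqarrow}\bs)$ and $({\seqarrow}/)$ --- which always hold, since the relevant hedges either contain a principal formula or are nonempty subhedges of $\Delta$. Nothing genuinely new is needed for the brackets and modalities: they are handled by the same recurse-and-wrap pattern used for $\bs$ and $/$, and the new conditions (iv) on the $\tau_i$ counts are discharged by exactly the bookkeeping that discharges (iii) for the $\sigma_i$ counts. This is why J\"ager's proof for $\mathbf{L}\Diamond$ carries over essentially unchanged.
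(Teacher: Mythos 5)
Your plan is correct and follows essentially the same route as the paper: the paper's proof is exactly a repetition of J\"ager's induction on the cut-free proof of $\Gamma[\Delta]\seqarrow C$, with a case analysis on the last rule and on how the selected hedge $\Delta$ overlaps the newly introduced material, inheriting the interpolant when $\Delta$ lies within one premise and otherwise combining interpolants via $\bullet$, $\bs$, $/$, $\Diamond_i$, $\varbox_i$ (e.g.\ $E\bs F$ when $\Delta$ straddles a principal $\bs$, with $E$ interpolating the part of the minor premise's antecedent \emph{outside} $\Delta$), the conditions (iii)--(iv) being discharged by precisely the $\min$-subadditivity bookkeeping you describe, with $\tau_i$ tracked in parallel with $\sigma_i$.
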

The type $E$ in the theorem is referred to as the \emph{interpolant} for $\Gamma[\Delta] \seqarrow C$ (relative to the ``partition'' $(\Delta;\Gamma[\contexthole])$ of $\Gamma[\Delta]$).
\begin{proof}
We repeat J\"ager's proof adapted to $L\Diamond_{\mathrm{m}}$ for the sake of convenience to the reader.
We write 
\[
\Gamma[\selected{\Delta}] \stackrel{E}{\seqarrow} C
\]
to mean that $E$ satisfies the conditions (i)--(iv) of the theorem for $\Gamma[\Delta] \seqarrow C$, relative to the partition $(\Delta;\Gamma[\contexthole])$ of $\Gamma[\Delta]$.
Such a type $E$ is found by induction on the cut-free proof $\mathcal{D}$ of $\Gamma[\Delta] \seqarrow C$, as follows.
It is a routine task to check that the conditions (i)--(iv) are satisfied.

\textit{Case 1.}
$\mathcal{D}$ is an initial sequent $p_i \seqarrow p_i$.
Then the only relevant partition of the antecedent is $(p_i;\contexthole)$.
\[
\selected{p_i} \stackrel{p_i}{\seqarrow} p_i
\]

\textit{Case 2.}
$\mathcal{D}$ ends in an application of $(\bs{\seqarrow})$.
There are six subcases to consider.
\begin{gather*}
\infer[(\bs{\seqarrow})]{\Delta'[\selected{\Delta''[\Gamma \concat A \bs B]}] \stackrel{E}{\seqarrow} C}{
	\Gamma \seqarrow A
	&
	\Delta'[\selected{\Delta''[B]}] \stackrel{E}{\seqarrow} C
}
\qquad
\infer[(\bs{\seqarrow})]{\Delta'[\Gamma' \concat \selected{\Gamma'' \concat A \bs B \concat \Pi}] \stackrel{E \bs F}{\seqarrow} C}{
	\selected{\Gamma'} \concat \Gamma'' \stackrel{E}{\seqarrow} A
	&
	\Delta'[\selected{B \concat \Pi}] \stackrel{F}{\seqarrow} C
}
\\
\infer[(\bs{\seqarrow})]{\Delta'[\selected{\Pi \concat \Gamma'} \concat \Gamma'' \concat A \bs B] \stackrel{E \bullet F}{\seqarrow} C}{
	\selected{\Gamma'} \concat \Gamma'' \stackrel{F}{\seqarrow} A
	&
	\Delta'[\selected{\Pi} \concat B] \stackrel{E}{\seqarrow} C
}
\qquad
\infer[(\bs{\seqarrow})]{\Delta[\Gamma' \concat \selected{\Gamma''} \concat \Gamma''' \concat A \bs B] \stackrel{E}{\seqarrow} C}{
	\Gamma' \concat \selected{\Gamma''} \concat \Gamma''' \stackrel{E}{\seqarrow} A
	&
	\Delta[B] \seqarrow C
}
\\
\infer[(\bs{\seqarrow})]{\Delta'[\Delta''[\selected{\Pi}] \concat \Delta'''[\Gamma \concat \concat A \bs B]] \stackrel{E}{\seqarrow} C}{
	\Gamma \seqarrow A
	&
	\Delta'[\Delta''[\selected{\Pi}] \concat \Delta'''[B]] \stackrel{E}{\seqarrow} C
}
\qquad
\infer[(\bs{\seqarrow})]{\Delta'[\Delta''[\Gamma \concat \concat A \bs B] \concat \Delta'''[\selected{\Pi}]] \stackrel{E}{\seqarrow} C}{
	\Gamma \seqarrow A
	&
	\Delta'[\Delta''[B] \concat \Delta'''[\selected{\Pi}]] \stackrel{E}{\seqarrow} C
}
\end{gather*}

\textit{Case 3.}
$\mathcal{D}$ ends in an application of $({\seqarrow}\bs)$.
\[
\infer[({\seqarrow}\bs)]{\Pi'[\selected{\Pi''}] \stackrel{E}{\seqarrow} A \bs B}{
A \concat \Pi'[\selected{\Pi''}] \stackrel{E}{\seqarrow} B
}
\]

\textit{Case 4.}
$\mathcal{D}$ ends in an application of $(/{\seqarrow})$.
This case is treated similarly to Case 2.

\textit{Case 5.}
$\mathcal{D}$ ends in an application of $({\seqarrow}/)$.
Similar to Case 3.

\textit{Case 6.}
$\mathcal{D}$ ends in an application of $(\bullet{\seqarrow})$.
There are three subcases to consider.
\begin{gather*}
\infer[(\bullet{\seqarrow})]{\Gamma'[\selected{\Gamma''[A \bullet B]}] \stackrel{E}{\seqarrow} C}{
	\Gamma'[\selected{\Gamma''[A \concat B]}] \stackrel{E}{\seqarrow} C
}
\qquad
\infer[(\bullet{\seqarrow})]{\Gamma'[\selected{\Gamma''} \concat \Gamma'''[A \bullet B]] \stackrel{E}{\seqarrow} C}{
	\Gamma'[\selected{\Gamma''} \concat \Gamma'''[A \concat B]] \stackrel{E}{\seqarrow} C
}
\\
\infer[(\bullet{\seqarrow})]{\Gamma'[\Gamma''[A \bullet B] \concat \selected{\Gamma'''}] \stackrel{E}{\seqarrow} C}{
	\Gamma'[\Gamma''[A \concat B] \concat \selected{\Gamma'''}] \stackrel{E}{\seqarrow} C
}
\end{gather*}

\textit{Case 7.}
$\mathcal{D}$ ends in an application of $({\seqarrow}\bullet)$.
There are three subcases to consider.
\begin{gather*}
\infer[({\seqarrow}\bullet)]{\Gamma'[\selected{\Pi}] \concat \Delta \stackrel{E}{\seqarrow} A \bullet B}{
	\Gamma'[\selected{\Pi}] \stackrel{E}{\seqarrow} A
	&
	\Delta \seqarrow B
}
\qquad
\infer[({\seqarrow}\bullet)]{\Gamma \concat \Delta'[\selected{\Pi}] \stackrel{E}{\seqarrow} A \bullet B}{
	\Gamma \seqarrow A
	&
	\Delta'[\selected{\Pi}] \stackrel{E}{\seqarrow} B
}
\\
\infer[({\seqarrow}\bullet)]{\Gamma' \concat \selected{\Gamma'' \concat \Delta'} \concat \Delta'' \stackrel{E \bullet F}{\seqarrow} A \bullet B}{
	\Gamma' \concat \selected{\Gamma''} \stackrel{E}{\seqarrow} A
	&
	\selected{\Delta'} \concat \Delta'' \stackrel{F}{\seqarrow} B
}
\end{gather*}

\textit{Case 8.}
$\mathcal{D}$ ends in an application of $(\Diamond_i{\seqarrow})$.
There are three subcases to consider.
\begin{gather*}
\infer[(\Diamond_i{\seqarrow})]{\Gamma'[\selected{\Gamma''[\Diamond_i A]}] \stackrel{E}{\seqarrow} B}{
	\Gamma'[\selected{\Gamma''[\langle_i A \rangle_i]}] \stackrel{E}{\seqarrow} B
}
\qquad
\infer[(\Diamond_i{\seqarrow})]{\Gamma'[\Gamma''[\selected{\Pi}] \concat \Gamma'''[\Diamond_i A]] \stackrel{E}{\seqarrow} B}{
	\Gamma'[\Gamma''[\selected{\Pi}] \concat \Gamma'''[\langle_i A \rangle_i]] \stackrel{E}{\seqarrow} B
}
\\
\infer[(\Diamond_i{\seqarrow})]{\Gamma'[\Gamma''[\Diamond_i A] \concat \Gamma'''[\selected{\Pi}]] \stackrel{E}{\seqarrow} B}{
	\Gamma'[\Gamma''[\langle_i A \rangle_i] \concat \Gamma'''[\selected{\Pi}]] \stackrel{E}{\seqarrow} B
}
\end{gather*}

\textit{Case 9.}
$\mathcal{D}$ ends in an application of $({\seqarrow}\Diamond_i)$.
There are two subcases to consider.
\begin{gather*}
\infer[({\seqarrow}\Diamond_i)]{\selected{\langle_i \Gamma \rangle_i} \stackrel{\Diamond_i E}{\seqarrow} \Diamond_i A}{
	\selected{\Gamma} \stackrel{E}{\seqarrow} A
}
\qquad
\infer[({\seqarrow}\Diamond_i)]{\langle_i \Gamma'[\selected{\Pi}] \rangle_i \stackrel{E}{\seqarrow} \Diamond_i A}{
	\Gamma'[\selected{\Pi}] \stackrel{E}{\seqarrow} A
}
\end{gather*}
Note that in the first subcase, $\Gamma$ cannot be empty, so the induction hypothesis applies.

\textit{Case 10.}
$\mathcal{D}$ ends in an application of $(\varbox_i{\seqarrow})$.
There are four subcases to consider.
(For the first subcase, note that $\vdash_{\mathbf{L}\Diamond_{\mathrm{m}}} A \seqarrow E$ implies $\vdash_{\mathbf{L}\Diamond_{\mathrm{m}}} \varbox_i A \seqarrow \varbox_i E$.)
\begin{gather*}
\infer[(\varbox_i{\seqarrow})]{\Gamma[\langle_i \selected{\varbox_i A} \rangle_i] \stackrel{\varbox_i E}{\seqarrow} B}{
	\Gamma[\selected{A}] \stackrel{E}{\seqarrow} B
}
\qquad
\infer[(\varbox_i{\seqarrow})]{\Gamma'[\selected{\Gamma''[\langle_i \varbox_i A \rangle_i]}] \stackrel{E}{\seqarrow} B}{
	\Gamma'[\selected{\Gamma''[A]}] \stackrel{E}{\seqarrow} B
}
\\
\infer[(\varbox_i{\seqarrow})]{\Gamma'[\selected{\Pi} \concat \Gamma''[\langle_i \varbox_i A \rangle_i]] \stackrel{E}{\seqarrow} B}{
	\Gamma'[\selected{\Pi} \concat \Gamma''[A]] \stackrel{E}{\seqarrow} B
}
\qquad
\infer[(\varbox_i{\seqarrow})]{\Gamma'[\Gamma''[\langle_i \varbox_i A \rangle_i] \concat \selected{\Pi}] \stackrel{E}{\seqarrow} B}{
	\Gamma'[\Gamma''[A] \concat \selected{\Pi}] \stackrel{E}{\seqarrow} B
}
\end{gather*}

\textit{Case 11.}
$\mathcal{D}$ ends in an application of $({\seqarrow}\varbox_i)$.
\[
\infer[({\seqarrow}\varbox_i)]{\Gamma'[\selected{\Pi}] \stackrel{E}{\seqarrow} \varbox_i A}{
	\langle_i \Gamma'[\selected{\Pi}] \rangle_i \stackrel{E}{\seqarrow} A
}
\qedhere
\]
\end{proof}
Note that just as in the case of the interpolation theorem for $\mathbf{L}$, the proof of Theorem~\ref{thm:interpolation} gives an algorithm for computing cut-free proofs of $\Delta \seqarrow E$ and of $\Gamma[E] \seqarrow C$ from the given cut-free proof of $\Gamma[\Delta] \seqarrow C$.

Each element $u$ of the free group generated by some set $S$ has a unique shortest representation as the product of some elements of $S \cup \{\, a^{-1} \mid a \in S \,\}$.
The length of this shortest representation is denoted by $|u|$.
It is easy to see that we always have $|\den{A}| \leq ||A||$.
Suppose that $\Gamma[\Delta] \seqarrow C$ in Theorem~\ref{thm:interpolation} is a thin indexed sequent.
Then since $\sigma_i(\Delta) + \sigma_i(\Gamma[\contexthole] \seqarrow C) = \sigma_i(\Gamma[\Delta] \seqarrow C) \leq 2$ and $\tau_i(\Delta) + \tau_i(\Gamma[\contexthole] \seqarrow C) = \tau_i(\Gamma[\Delta] \seqarrow C) \leq 2$, it follows that the interpolant $E$ satisfies $\sigma_i(E) \leq 1$ and $\tau_i(E) \leq 1$.
As \cite{Pentus:1993,Pentus:1997} observed for the case of $\mathbf{L}$, this implies $||E|| = |\den{E}|$ and together with Lemma~\ref{lem:free-group} gives:
\begin{equation}
\label{eqn:thin-free-group}
||E|| = |\den{\Delta}|.
\end{equation}

The following little lemma played a crucial role in Pentus's \citeyearpar{Pentus:1993,Pentus:1997} proof:
\begin{lemma}[Pentus]
\label{lem:Pentus}
If\/ $u_1,\dots,u_n$\/ $(n \geq 2)$ are elements of the free group generated by some set such that\/ $u_1 \dots u_n$ equals the identity, then there is a number\/ $k < n$ such that\/ $|u_k u_{k+1}| \leq \max(|u_k|,|u_{k+1}|)$.
\end{lemma}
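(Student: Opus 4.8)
The plan is to split each piece $u_i$ into the part that is cancelled ``to the left'' and the part cancelled ``to the right'' when the whole product $u_1\dots u_n$ is freely reduced to the identity, and then to find a consecutive pair $u_k,u_{k+1}$ in which $u_k$ is ``right-heavy'' while $u_{k+1}$ is ``left-heavy''; for such a pair enough cancellation occurs in $u_k u_{k+1}$ to force $|u_k u_{k+1}|\le\max(|u_k|,|u_{k+1}|)$. To make this precise, let $g_0$ be the identity and let $g_i$ be the reduced form of $g_{i-1}u_i$ for $i=1,\dots,n$, so that $g_n$ is the identity. For group elements $g,h$ let $\delta(g,h)$ denote the length of the longest terminal segment of $g$ that cancels against an initial segment of $h$, so that $|gh|=|g|+|h|-2\,\delta(g,h)$ and $\delta(g,h)\le\min(|g|,|h|)$. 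Set $a_i=\delta(g_{i-1},u_i)$ and $b_i=|u_i|-a_i$; then $a_i,b_i\ge 0$ and $a_i+b_i=|u_i|$. Since $g_0$ is the identity, $a_1=0$; and since $g_n$ is the identity, $g_{n-1}=u_n^{-1}$, so $a_n=|u_n|$ and hence $b_n=0$.

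The one substantive point is the estimate $\delta(u_k,u_{k+1})\ge\min(b_k,a_{k+1})$ for each $k<n$. When $b_k>0$, forming $g_k=g_{k-1}u_k$ cancels exactly the first $a_k$ letters of $u_k$ and nothing further, so $g_k$ ends with precisely the last $b_k$ letters of $u_k$; and forming $g_{k+1}=g_ku_{k+1}$ cancels the first $a_{k+1}$ letters of $u_{k+1}$ against the terminal segment of $g_k$ of that length, matched letter by letter from the innermost pair outward. Comparing these two descriptions over the first $\min(b_k,a_{k+1})$ positions shows that the last $\min(b_k,a_{k+1})$ letters of $u_k$ form the inverse of the first $\min(b_k,a_{k+1})$ letters of $u_{k+1}$, which is the estimate (the cases $b_k=0$ and $a_{k+1}=0$ being trivial).

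It remains to locate the pair by a discrete intermediate-value argument. Call $u_i$ \emph{right-heavy} if $b_i\ge a_i$ and \emph{left-heavy} if $a_i\ge b_i$; every $u_i$ is at least one of the two. By the boundary values above, $u_1$ is right-heavy and $u_n$ is left-heavy. Let $k$ be the largest index in $\{1,\dots,n-1\}$ with $u_k$ right-heavy (there is one, since $u_1$ is right-heavy). Then either $k+1=n$, so $u_{k+1}$ is left-heavy, or $k+1\le n-1$, in which case the maximality of $k$ forces $u_{k+1}$ not to be right-heavy, hence left-heavy. For this $k$ we get $2b_k\ge a_k+b_k=|u_k|$ and $2a_{k+1}\ge a_{k+1}+b_{k+1}=|u_{k+1}|$, so $2\min(b_k,a_{k+1})\ge\min(|u_k|,|u_{k+1}|)$, and therefore
\[
|u_k u_{k+1}|=|u_k|+|u_{k+1}|-2\,\delta(u_k,u_{k+1})\le|u_k|+|u_{k+1}|-\min(|u_k|,|u_{k+1}|)=\max(|u_k|,|u_{k+1}|).
\]

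I expect the main obstacle to be making the estimate $\delta(u_k,u_{k+1})\ge\min(b_k,a_{k+1})$ fully rigorous, since it rests on a small but delicate bookkeeping of which letters of $u_k$ and $u_{k+1}$ survive into, and cancel within, the reduced partial products $g_k$ and $g_{k+1}$. The boundary computations $a_1=0=b_n$, the extraction of the heavy/heavy pair, and the final arithmetic are all routine.
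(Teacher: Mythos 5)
Your proof is correct, and there is in fact nothing in the paper to compare it against: the paper states Lemma~\ref{lem:Pentus} without proof, importing it from Pentus's 1993/1997 work, so what you have produced is a self-contained proof of the cited result rather than a variant of an in-paper argument. Checking your steps: the identity $|gh|=|g|+|h|-2\,\delta(g,h)$ is valid for reduced words because after the maximal cancellation the concatenation is itself reduced; the boundary values $a_1=0$ and $b_n=0$ follow as you say from $g_0=e$ and $g_{n-1}=u_n^{-1}$; the key estimate $\delta(u_k,u_{k+1})\ge\min(b_k,a_{k+1})$ is sound, since when $b_k>0$ the maximality of the cancellation $a_k$ forces the reduced word $g_k$ to end in exactly the last $b_k$ letters of $u_k$, while the cancellation of length $a_{k+1}$ in $g_k u_{k+1}$ identifies the last $a_{k+1}$ letters of $g_k$ with the formal inverses of the first $a_{k+1}$ letters of $u_{k+1}$, and the two descriptions agree on the overlap of length $\min(b_k,a_{k+1})$; and choosing $k$ as the largest right-heavy index in $\{1,\dots,n-1\}$ does produce a right-heavy/left-heavy adjacent pair (using $n\ge 2$ and that $u_n$ is left-heavy), giving $2\min(b_k,a_{k+1})\ge\min(|u_k|,|u_{k+1}|)$ and hence the bound $|u_ku_{k+1}|\le\max(|u_k|,|u_{k+1}|)$. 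Degenerate cases (trivial $u_i$) are covered automatically, and the step you flagged as delicate is already rigorous as written, so there is no gap to repair.
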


\section{The Recognizing Power of $\mathbf{L}\Diamond$}
\label{sec:LDiamond}

Let $\mathcal{S}$ be some finite set of sequents.
We write $\mathcal{S} \vdash_{\mathrm{Cut}} \Gamma \seqarrow A$ to mean that the sequent $\Gamma \seqarrow A$ can be derived from $\mathcal{S}$ using Cut only.
Let $\mathcal{B}$ be a finite set of primitive types, and define
\[
\mathcal{S}_{\mathcal{B},m} = \{\, A_1 \dots A_n \seqarrow A_{n+1} \mid 
\begin{aligned}[t]
& n \leq 2,\\
& \text{$A_i \in \Typ(\mathcal{B})$ and $||A_i|| \leq m$ $(1 \leq i \leq n+1)$,}\\
& {\vdash_{\mathbf{L}\Diamond} A_1 \dots A_n \seqarrow A_{n+1}} \,\}.
\end{aligned}
\]
Clearly, $\mathcal{S}_{\mathcal{B},m}$ is finite.
Combining Lemma~\ref{lem:Pentus} with Theorem~\ref{thm:interpolation} and Lemma~\ref{lem:free-group} in the exact same way as \cite{Pentus:1993} did with the corresponding results about $\mathbf{L}$, we can prove the following:
\begin{lemma}
\label{lem:without-brackets}
Suppose\/ $A_i \in \Typ(\mathcal{B})$ and\/ $||A_i|| \leq m$ for\/ $i=1,\dots,n+1$.
Then\/ $\vdash_{\mathbf{L}\Diamond} A_1 \dots A_n \seqarrow A_{n+1}$ only if\/ $\mathcal{S}_{\mathcal{B},m} \vdash_{\mathrm{Cut}} A_1 \dots A_n \seqarrow A_{n+1}$.
\end{lemma}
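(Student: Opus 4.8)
The plan is to follow Pentus's argument for $\mathbf{L}$ line for line, now that we have the interpolation theorem (Theorem~\ref{thm:interpolation}), the free group interpretation (Lemma~\ref{lem:free-group}), Pentus's combinatorial lemma (Lemma~\ref{lem:Pentus}), and the passage from $\mathbf{L}\Diamond$ to thin sequents of $\mathbf{L}\Diamond_{\mathrm{m}}$. I would argue by induction on $n$. If $n \leq 2$, then $A_1 \dots A_n \seqarrow A_{n+1}$ is itself an element of $\mathcal{S}_{\mathcal{B},m}$, so $\mathcal{S}_{\mathcal{B},m} \vdash_{\mathrm{Cut}} A_1 \dots A_n \seqarrow A_{n+1}$ with no Cut at all. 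So suppose $n \geq 3$. First I would pass to a thin indexed sequent: by the substitution-instance lemma, $A_1 \dots A_n \seqarrow A_{n+1}$ is a substitution instance, via some primitive type substitution $\theta$, of a thin indexed sequent $A_1' \dots A_n' \seqarrow A_{n+1}'$ with $\vdash_{\mathbf{L}\Diamond_{\mathrm{m}}} A_1' \dots A_n' \seqarrow A_{n+1}'$; the indexed antecedent is bracket-free because the original one is, $\|A_i'\| = \|A_i\| \leq m$ for each $i$, and $\theta$ sends every primitive type occurring in the indexed sequent into $\mathcal{B}$. Put $u_i = \den{A_i'}$ for $i \leq n$ and $u_{n+1} = \den{A_{n+1}'}^{-1}$. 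By Lemma~\ref{lem:free-group} the product $u_1 \dots u_{n+1}$ is the identity of the free group, and $|u_i| \leq \|A_i'\| \leq m$ for every $i$.

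Next I would apply Lemma~\ref{lem:Pentus} to $u_1, \dots, u_{n+1}$ (there are $n+1 \geq 4$ of them), obtaining an index $k$ with $1 \leq k \leq n$ and $|u_k u_{k+1}| \leq \max(|u_k|,|u_{k+1}|) \leq m$. Suppose first $k \leq n-1$. I would apply Theorem~\ref{thm:interpolation} to the partition with $\Delta = A_k' A_{k+1}'$ and $\Gamma[\contexthole] = A_1' \dots A_{k-1}' \contexthole A_{k+2}' \dots A_n'$, producing a type $E$ with $\vdash_{\mathbf{L}\Diamond_{\mathrm{m}}} A_k' A_{k+1}' \seqarrow E$ and $\vdash_{\mathbf{L}\Diamond_{\mathrm{m}}} \Gamma[E] \seqarrow A_{n+1}'$. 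By thinness and~(\ref{eqn:thin-free-group}), $\|E\| = |\den{A_k' A_{k+1}'}| = |u_k u_{k+1}| \leq m$, and by clause (iii) every primitive type of $E$ occurs in $\Delta$, so the type $\bar E$ obtained by de-indexing $E$ and applying $\theta$ lies in $\Typ(\mathcal{B})$ and has $\|\bar E\| \leq m$. Since de-indexing together with $\theta$ takes $\mathbf{L}\Diamond_{\mathrm{m}}$-derivations to $\mathbf{L}\Diamond$-derivations, this gives $\vdash_{\mathbf{L}\Diamond} A_k A_{k+1} \seqarrow \bar E$, which is a member of $\mathcal{S}_{\mathcal{B},m}$, and $\vdash_{\mathbf{L}\Diamond} A_1 \dots A_{k-1} \bar E A_{k+2} \dots A_n \seqarrow A_{n+1}$, whose antecedent has only $n-1$ types; the induction hypothesis applies to the latter, and one further Cut with the former yields $\mathcal{S}_{\mathcal{B},m} \vdash_{\mathrm{Cut}} A_1 \dots A_n \seqarrow A_{n+1}$.

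The remaining possibility, $k = n$, is the only place where a different partition is needed, and finding it is the crux of the argument. Here I would instead take $\Delta = A_1' \dots A_{n-1}'$ (nonempty since $n \geq 3$) and $\Gamma[\contexthole] = \contexthole\, A_n'$, obtaining $E$ with $\vdash_{\mathbf{L}\Diamond_{\mathrm{m}}} A_1' \dots A_{n-1}' \seqarrow E$ and $\vdash_{\mathbf{L}\Diamond_{\mathrm{m}}} E\, A_n' \seqarrow A_{n+1}'$. The point is that in the free group $u_1 \dots u_{n-1} = \den{A_{n+1}'}\,\den{A_n'}^{-1}$, which is the inverse of $u_n u_{n+1}$; hence $\|E\| = |\den{\Delta}| = |u_1 \dots u_{n-1}| = |u_n u_{n+1}| \leq m$ by the choice $k = n$, and again $\bar E \in \Typ(\mathcal{B})$ with $\|\bar E\| \leq m$ by clause (iii). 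De-indexing as before gives $\vdash_{\mathbf{L}\Diamond} A_1 \dots A_{n-1} \seqarrow \bar E$, handled by the induction hypothesis, and $\vdash_{\mathbf{L}\Diamond} \bar E\, A_n \seqarrow A_{n+1}$, a member of $\mathcal{S}_{\mathcal{B},m}$; one Cut combines them into $\mathcal{S}_{\mathcal{B},m} \vdash_{\mathrm{Cut}} A_1 \dots A_n \seqarrow A_{n+1}$.

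Apart from spotting the partition in the $k = n$ case, the argument is essentially bookkeeping: checking that lengths stay $\leq m$, that primitive types stay in $\mathcal{B}$, and that de-indexing plus $\theta$ carries provability from $\mathbf{L}\Diamond_{\mathrm{m}}$ to $\mathbf{L}\Diamond$. I expect the $k=n$ case to be the only genuinely nontrivial point; brackets and modalities contribute nothing new here, because they are already tracked by the $\tau_i$ counters and by clause (iv) of the interpolation theorem, exactly as the primitive types are tracked in Pentus's treatment of $\mathbf{L}$.
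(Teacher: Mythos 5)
Your proof is correct and follows essentially the same route as the paper: induction on $n$, passage to a thin indexed $\mathbf{L}\Diamond_{\mathrm{m}}$ sequent, the free group interpretation plus Pentus's lemma to locate a short product, interpolation with respect to the partition $(A_k' A_{k+1}'; \dots)$ or $(A_1' \dots A_{n-1}'; \contexthole\, A_n')$, and de-indexing plus $\theta$ followed by the induction hypothesis and one Cut. Folding the inverse into $u_{n+1}$ rather than writing $u_{n+1}^{-1}$ is only a cosmetic difference.
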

\begin{proof}
Induction on $n$.
If $n \leq 2$, then $A_1 \dots A_n \seqarrow A_{n+1}$ is in $\mathcal{S}_{\mathcal{B},m}$, so $\mathcal{S}_{\mathcal{B},m} \vdash_{\mathrm{Cut}} A_1 \dots A_n \seqarrow A_{n+1}$.
If $n \geq 3$, let $A_1' \dots A_n' \seqarrow A'_{n+1}$ be a thin indexed sequent such that $\vdash_{\mathbf{L}\Diamond_{\mathrm{m}}} A_1' \dots A_n' \seqarrow A'_{n+1}$ and $A_1 \dots A_n \seqarrow A_{n+1}$ can be obtained by applying the substitution $\theta$ to the primitive types and removing all subscripts from the modalities in $A_1' \dots A_n' \seqarrow A_{n+1}'$.
Let $u_i = \den{A_i'}$ for $i=1,\dots,n+1$.
Since $u_1 \dots u_n = u_{n+1}$ by Lemma~\ref{lem:free-group}, $u_1 \dots u_n u_{n+1}^{-1}$ equals the identity.
Since $|\den{A_i'}| \leq ||A_i'|| \leq m$, we clearly have $|u_i| \leq m$ for $i=1,\dots,n$ and $|u_{n+1}^{-1}| = |u_{n+1}| \leq m$.
By Lemma~\ref{lem:Pentus}, either $|u_k u_{k+1}| \leq m$ for some $k \leq n-1$ or $|u_n u_{n+1}^{-1}| \leq m$.

\textit{Case 1.}
$|u_k u_{k+1}| \leq m$ for some $k \leq n-1$.
Let $E'$ be the interpolant for $A_1' \dots A_n' \seqarrow A_{n+1}'$ with respect to the partition $(A_k' A_{k+1}'; A_1' \dots A_{k-1}' \concat \contexthole \concat A_{k+2}' \dots A_n')$ of its antecedent.
By the remark following Theorem~\ref{thm:interpolation} (equation (\ref{eqn:thin-free-group})), $||E'|| = |\den{A_k' A_{k+1}'}| = |u_k u_{k+1}| \leq m$.
Let $E$ be the result of applying the substitution $\theta$ to the primitive types and removing subscripts from the modalities in $E'$.
Since $\vdash_{\mathbf{L}\Diamond_{\mathrm{m}}} A_k' A_{k+1}' \seqarrow E'$ and $\vdash_{\mathbf{L}\Diamond_{\mathrm{m}}} A_1' \dots A_{k-1}' E' A_{k+2}' \dots A_n' \seqarrow A_{n+1}'$, we must have $\vdash_{\mathbf{L}\Diamond} A_k A_{k+1} \seqarrow E$ and $\vdash_{\mathbf{L}\Diamond} A_1 \dots A_{k-2} E A_{k+2} \dots A_n \seqarrow A_{n+1}$.
Since $||E|| = ||E'|| \leq m$, $A_k A_{k+1} \seqarrow E$ is in $\mathcal{S}_{\mathcal{B},m}$.
By the induction hypothesis, $\mathcal{S}_{\mathcal{B},m} \vdash_{\mathrm{Cut}} A_1 \dots A_{k-2} E A_{k+2} \dots A_n \seqarrow A_{n+1}$.
It follows that $\mathcal{S}_{\mathcal{B},m} \vdash_{\mathrm{Cut}} A_1 \dots A_n \seqarrow A_{n+1}$.

\textit{Case 2.}
$|u_n u_{n+1}^{-1}| \leq m$.
Since $u_1 \dots u_{n-1} = (u_n u_{n+1}^{-1})^{-1}$, we have $|u_1 \dots u_{n-1}| \leq m$.
Let $E'$ be the interpolant for $A_1' \dots A_n' \seqarrow A_{n+1}'$ with respect to the partition $(A_1' \dots A_{n-1}'; \,\contexthole \concat A_n')$ of its antecedent.
As in Case 1, we have $||E'|| = |\den{A_1' \dots A_{n-1}'}| = |u_1 \dots u_{n-1}| \leq m$.
Let $E$ be the result of applying the substitution $\theta$ to the primitive types and removing subscripts from the modalities in $E'$.
Since $\vdash_{\mathbf{L}\Diamond_{\mathrm{m}}} A_1' \dots A_{n-1}' \seqarrow E'$ and $\vdash_{\mathbf{L}\Diamond_{\rm{m}}} E' A_n' \seqarrow A_{n+1}'$, we must have $\vdash_{\mathbf{L}\Diamond} A_1 \dots A_{n-1} \seqarrow E$ and $\vdash_{\mathbf{L}\Diamond} E A_n \seqarrow A_{n+1}$.
Since $||E|| = ||E'|| \leq m$, the sequent $E A_n \seqarrow A_{n+1}$ is in $\mathcal{S}_{\mathcal{B},m}$, and $\mathcal{S}_{\mathcal{B},m} \vdash_{\mathrm{Cut}} A_1 \dots A_{n-1} \seqarrow E$ by induction hypothesis.
It follows that $\mathcal{S}_{\mathcal{B},m} \vdash_{\mathrm{Cut}} A_1 \dots A_n \seqarrow A_{n+1}$.
\qed
\end{proof}

Lemma~\ref{lem:without-brackets} only takes care of $\mathbf{L}\Diamond$-provable sequents without brackets.
We need to find a finite set of sequents $\mathcal{T}_{\mathcal{B},m}$ such that if $\vdash_{\mathbf{L}\Diamond} \Gamma \seqarrow A_{n+1}$, the yield of $\Gamma$ is $A_1 \dots A_n$, and $||A_i|| \leq m$ for $i=1,\dots,n+1$, then $\mathcal{T}_{\mathcal{B},m} \vdash_{\mathrm{Cut}} \Gamma \seqarrow A_{n+1}$.
The following definition will do:
\[
\mathcal{T}_{\mathcal{B},m} = 
\begin{aligned}[t]
& \mathcal{S}_{\mathcal{B},m} \cup {}\\
& \{\, \langle A \rangle \seqarrow \Diamond A \mid A \in \Typ(\mathcal{B}), ||A|| \leq m-2 \,\} \cup {}\\
& \{\, \langle \varbox A \rangle \seqarrow A \mid A \in \Typ(\mathcal{B}), ||A|| \leq m-2 \,\}.
\end{aligned}
\]
\citet[Lemma~7.5]{Jaeger:2003} came very close to showing that $\mathcal{T}_{\mathcal{B},m}$ satisfies the required property, but incorrectly relied on the faithfulness of Versmissen's \citeyearpar{Versmissen:1996} translation.

\cite{Jaeger:2003} derived the following as a consequence of his interpolation theorem for $\mathbf{L}\Diamond$:
\begin{lemma}[J\"ager]
\label{lem:Jaeger}
Suppose\/ $\vdash_{\mathbf{L}\Diamond} \Gamma[\langle \Delta \rangle] \seqarrow A_{n+1}$, where the yield of\/ $\Gamma[\langle \Delta \rangle]$ is\/ $A_1 \dots A_n$ with\/ $A_i \in \Typ(\mathcal{B})$ and\/ $||A_i|| \leq m$ for\/ $i=1,\dots,n+1$.
Then there is a\/ $B \in \Typ(\mathcal{B})$ such that\/ $||B|| \leq m-2$ and one of the following holds:
\begin{enumerate}
\item[\upshape (i)]
$\vdash_{\mathbf{L}\Diamond} \Delta \seqarrow B$ and\/ $\vdash_{\mathbf{L}\Diamond} \Gamma[\Diamond B] \seqarrow A_{n+1}$.
\item[\upshape (ii)]
$\vdash_{\mathbf{L}\Diamond} \Delta \seqarrow \varbox B$ and\/ $\vdash_{\mathbf{L}\Diamond} \Gamma[B] \seqarrow A_{n+1}$.
\end{enumerate}
\end{lemma}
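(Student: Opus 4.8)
The plan is to transfer the problem to the multimodal calculus $\mathbf{L}\Diamond_{\mathrm{m}}$, apply the interpolation theorem (Theorem~\ref{thm:interpolation}) to a partition that isolates the bracketed subtree, and then use the rules $(\Diamond_i{\seqarrow})$ and $({\seqarrow}\varbox_i)$, which let one trade an indexed bracket for an indexed modality in the antecedent and vice versa.

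First I would invoke the lemma that every $\mathbf{L}\Diamond$-provable sequent is a substitution instance of a thin $\mathbf{L}\Diamond_{\mathrm{m}}$-provable sequent. Since that substitution only relabels leaves and re-indexes brackets and modalities, it preserves tree shape, so from the given cut-free proof of $\Gamma[\langle\Delta\rangle]\seqarrow A_{n+1}$ we obtain a sequent $\Gamma'[\langle_i\Delta'\rangle_i]\seqarrow C'$ with $\vdash_{\mathbf{L}\Diamond_{\mathrm{m}}}\Gamma'[\langle_i\Delta'\rangle_i]\seqarrow C'$ and a primitive type substitution $\theta$ such that applying $\theta$ and erasing all subscripts sends $\Gamma'[\contexthole]$, $\Delta'$, $C'$ back to $\Gamma[\contexthole]$, $\Delta$, $A_{n+1}$ (and the leaf types back to $A_1,\dots,A_n$). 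Here $\Delta'$ is nonempty because a provable antecedent never contains $\langle\rangle$; erasing subscripts and applying $\theta$ preserve type length, so every type of $\Gamma'[\langle_i\Delta'\rangle_i]\seqarrow C'$ has length $\le m$, and by the subformula property the same holds for every type in any cut-free proof of it; and applying $\theta$ and erasing subscripts later sends any indexed type $B'$ to a $B\in\Typ(\mathcal{B})$ with $||B||=||B'||$ and sends $\mathbf{L}\Diamond_{\mathrm{m}}$-derivations to $\mathbf{L}\Diamond$-derivations.

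The outer bracket contributes $1$ to $\tau_i$ of the sequent, so thinness leaves at most one further occurrence of $\langle_i$, $\Diamond_i$, or $\varbox_i$ anywhere in $\Gamma'[\langle_i\Delta'\rangle_i]\seqarrow C'$; in particular $\tau_i(\Delta')\le 1$. If $\tau_i(\Delta')=0$, I would apply Theorem~\ref{thm:interpolation} to the partition $(\Delta';\Gamma'[\langle_i\contexthole\rangle_i])$, obtaining a type $F'$ with $\vdash_{\mathbf{L}\Diamond_{\mathrm{m}}}\Delta'\seqarrow F'$ and $\vdash_{\mathbf{L}\Diamond_{\mathrm{m}}}\Gamma'[\langle_i F'\rangle_i]\seqarrow C'$, hence $\vdash_{\mathbf{L}\Diamond_{\mathrm{m}}}\Gamma'[\Diamond_i F']\seqarrow C'$ by $(\Diamond_i{\seqarrow})$; by thinness (equation~\eqref{eqn:thin-free-group}) $||F'||=|\den{\Delta'}|$, so if $|\den{\Delta'}|\le m-2$, then $B'=F'$ gives alternative~(i). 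If $\tau_i(\Delta')=1$, then thinness forces $\tau_i(\Gamma'[\contexthole]\seqarrow C')=0$, so $\den{C'}$ carries no letter $\langle_i^{\pm1},\rangle_i^{\pm1}$; I would then apply Theorem~\ref{thm:interpolation} to the partition $(\langle_i\Delta'\rangle_i;\Gamma'[\contexthole])$, obtaining $E'$ with $\vdash_{\mathbf{L}\Diamond_{\mathrm{m}}}\langle_i\Delta'\rangle_i\seqarrow E'$ and $\vdash_{\mathbf{L}\Diamond_{\mathrm{m}}}\Gamma'[E']\seqarrow C'$, hence $\vdash_{\mathbf{L}\Diamond_{\mathrm{m}}}\Delta'\seqarrow\varbox_i E'$ by $({\seqarrow}\varbox_i)$; by thinness $||E'||=|\den{\langle_i\Delta'\rangle_i}|$, so if $|\den{\langle_i\Delta'\rangle_i}|\le m-2$, then $B'=E'$ gives alternative~(ii). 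In either case, transporting along $\theta$ and subscript-erasure yields the required $B$.

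The main obstacle is the length bound in each case, which is where the free-group interpretation does the real work. If $\tau_i(\Delta')=0$, then $\den{\langle_i\Delta'\rangle_i}=\langle_i\den{\Delta'}\rangle_i$ is a reduced word, and in the identity $\den{\Gamma'[\langle_i\Delta'\rangle_i]}=\den{C'}$ from Lemma~\ref{lem:free-group} the only further letters $\langle_i^{\pm1},\rangle_i^{\pm1}$ come from the at most one further occurrence of $\langle_i$, $\Diamond_i$, $\varbox_i$; a short inspection of the location and polarity of that occurrence shows that either $\langle_i\den{\Delta'}\rangle_i$ survives as a contiguous factor of the reduced word of $\den{C'}$, forcing $|\den{\Delta'}|+2\le|\den{C'}|\le||C'||\le m$, or it is enveloped and cancelled by the image of a single type of $\Gamma'[\contexthole]$, which, being a subformula of a type of length $\le m$, again forces $|\den{\Delta'}|\le m-2$. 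If $\tau_i(\Delta')=1$, the four letters $\langle_i^{\pm1},\rangle_i^{\pm1}$ of the antecedent (two from the outer bracket, two from the occurrence inside $\Delta'$) must all cancel against one another; tracking which cancellations are possible shows that occurrence must be of the form $\varbox_i W$ for a subformula $\varbox_i W$ of a leaf type of $\Delta'$, positioned so that the outer bracket cancels the outer letters of $\den{\varbox_i W}$, whence $|\den{\langle_i\Delta'\rangle_i}|\le|\den{W}|\le||W||=||\varbox_i W||-2\le m-2$. Both arguments use only soundness of the free-group interpretation, thinness, the inequality $|\den{X}|\le||X||$, and the subformula length bound, so each is elementary once set up.
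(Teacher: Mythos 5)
Your argument is correct, but it is not the route the paper takes: the paper does not prove Lemma~\ref{lem:Jaeger} at all (it is quoted from J\"ager as a corollary of his interpolation theorem), and the model for what such a proof looks like is the paper's own proof of the guarded analogue, Lemma~\ref{lem:Jaeger-guarded}. There one inducts on the cut-free derivation and traces the displayed bracket pair to the rule that introduced it --- either $({\seqarrow}\Diamond)$ with $\Gamma[\contexthole]=\contexthole$ or $(\varbox{\seqarrow})$ with $\Delta=\varbox C$ --- and at that point the interpolant ($\Diamond B$ resp.\ $\varbox B$) has its length bounded directly by conditions (iii)--(iv) of the interpolation theorem, since the partition is against a single type ($A_{n+1}$, resp.\ the leaf $\varbox C$) of length at most $m$; no free-group reasoning or thin sequents are needed. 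You instead stay at the end-sequent: pass to a thin indexed sequent, choose the partition $(\Delta';\Gamma'[\langle_i\contexthole\rangle_i])$ or $(\langle_i\Delta'\rangle_i;\Gamma'[\contexthole])$ according to whether the second index-$i$ occurrence lies inside $\Delta'$, and extract the bound $\le m-2$ from equation~(\ref{eqn:thin-free-group}) plus a cancellation analysis in the free group. That analysis, which you only gesture at (``a short inspection''), does go through: when $\tau_i(\Delta')=0$, polarity considerations show the factor $\langle_i\den{\Delta'}\rangle_i$ either survives into the reduced word of $\den{C'}$ (giving $|\den{\Delta'}|\le||C'||-2\le m-2$) or is forced to cancel against a negatively occurring $\Diamond_i W$ in a context type, forcing $\den{\Delta'}=\den{W}$ and $|\den{\Delta'}|\le||\varbox W||,||\Diamond_i W||$-style subformula bounds; when $\tau_i(\Delta')=1$, the only non-crossing matching of the four index-$i$ letters is the one you describe, with a positive $\varbox_i W$ inside a leaf of $\Delta'$, giving $|\den{\langle_i\Delta'\rangle_i}|=|\den{W}|\le m-2$. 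So both proofs are sound; J\"ager's/the paper's version is lighter (pure proof induction plus interpolation), while yours reuses the thinness-and-free-group machinery already set up for Lemma~\ref{lem:without-brackets} and avoids a further induction on derivations, at the price of the polarity bookkeeping, which you should spell out if you write this up.
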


This together with Lemma~\ref{lem:without-brackets} is enough to establish the following:
\begin{lemma}
\label{lem:cut-completeness}
Let\/ $\Gamma \seqarrow A_{n+1}$ be an\/ $\mathbf{L}\Diamond$ sequent such that the yield of\/ $\Gamma$ is\/ $A_1 \dots A_n$ with\/ $A_i \in \Typ(\mathcal{B})$ and\/ $||A_i|| \leq m$ for\/ $i=1,\dots,n+1$.
Then\/ $\vdash_{\mathbf{L}\Diamond} \Gamma \seqarrow A_{n+1}$ if and only if\/ $\mathcal{T}_{\mathcal{B},m} \vdash_{\mathrm{Cut}} \Gamma \seqarrow A_{n+1}$.
\end{lemma}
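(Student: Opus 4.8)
The plan is to prove the trivial (``if'') direction by soundness, and the substantive (``only if'') direction by induction on the number of bracket pairs occurring in the antecedent $\Gamma$, peeling off one bracket pair at a time by means of Lemma~\ref{lem:Jaeger} and falling back on Lemma~\ref{lem:without-brackets} for the bracket-free base case.

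For the ``if'' direction I would first observe that every sequent in $\mathcal{T}_{\mathcal{B},m}$ is provable in $\mathbf{L}\Diamond$: the members of $\mathcal{S}_{\mathcal{B},m}$ are provable by definition; $\langle A \rangle \seqarrow \Diamond A$ follows from $A \seqarrow A$ by one application of $({\seqarrow}\Diamond)$; and $\langle \varbox A \rangle \seqarrow A$ follows from $A \seqarrow A$ by one application of $(\varbox{\seqarrow})$ (where $A \seqarrow A$ is provable in $\mathbf{L}\Diamond$ for every type $A$ by the usual identity expansion). Since the cut rule is eliminable in $\mathbf{L}\Diamond$, any sequent derived from $\mathbf{L}\Diamond$-provable sequents using Cut alone is again $\mathbf{L}\Diamond$-provable; hence $\mathcal{T}_{\mathcal{B},m} \vdash_{\mathrm{Cut}} \Gamma \seqarrow A_{n+1}$ implies $\vdash_{\mathbf{L}\Diamond} \Gamma \seqarrow A_{n+1}$.

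For the ``only if'' direction I would induct on the number $N$ of bracket pairs in $\Gamma$. If $N = 0$, then $\Gamma$ is the flat hedge $A_1 \dots A_n$, so Lemma~\ref{lem:without-brackets} gives $\mathcal{S}_{\mathcal{B},m} \vdash_{\mathrm{Cut}} A_1 \dots A_n \seqarrow A_{n+1}$, and since $\mathcal{S}_{\mathcal{B},m} \subseteq \mathcal{T}_{\mathcal{B},m}$ we are done. If $N \geq 1$, choose an occurrence of a bracket pair and write $\Gamma = \Gamma'[\langle \Delta \rangle]$ (with $\Delta$ nonempty, since no provable $\mathbf{L}\Diamond$ sequent contains $\langle \rangle$); apply Lemma~\ref{lem:Jaeger} to get $B \in \Typ(\mathcal{B})$ with $\lVert B \rVert \leq m-2$ satisfying (i) or (ii). In case (i), both $\Gamma'[\Diamond B] \seqarrow A_{n+1}$ and $\Delta \seqarrow B$ are $\mathbf{L}\Diamond$-provable; their antecedents have strictly fewer than $N$ bracket pairs, and their yields consist of types in $\Typ(\mathcal{B})$ of length $\leq m$ (for the yield of $\Gamma'[\Diamond B]$ this uses $\lVert \Diamond B \rVert = \lVert B \rVert + 2 \leq m$), so the induction hypothesis gives $\mathcal{T}_{\mathcal{B},m} \vdash_{\mathrm{Cut}} \Gamma'[\Diamond B] \seqarrow A_{n+1}$ and $\mathcal{T}_{\mathcal{B},m} \vdash_{\mathrm{Cut}} \Delta \seqarrow B$. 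As $\lVert B \rVert \leq m-2$, the sequent $\langle B \rangle \seqarrow \Diamond B$ is in $\mathcal{T}_{\mathcal{B},m}$; cutting $B$ into the context $\langle \contexthole \rangle$ yields $\mathcal{T}_{\mathcal{B},m} \vdash_{\mathrm{Cut}} \langle \Delta \rangle \seqarrow \Diamond B$, and then cutting $\Diamond B$ into $\Gamma'[\contexthole]$ yields $\mathcal{T}_{\mathcal{B},m} \vdash_{\mathrm{Cut}} \Gamma'[\langle \Delta \rangle] \seqarrow A_{n+1}$. Case (ii) runs identically with $\Diamond B$ replaced by $\varbox B$ throughout, using $\Gamma'[B] \seqarrow A_{n+1}$, $\Delta \seqarrow \varbox B$, and the axiom $\langle \varbox B \rangle \seqarrow B \in \mathcal{T}_{\mathcal{B},m}$.

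The argument is essentially routine once Lemmas~\ref{lem:without-brackets} and~\ref{lem:Jaeger} are available; the only points demanding care are verifying that each inductive step preserves the side conditions (membership in $\Typ(\mathcal{B})$, the length bound $\leq m$ — which is exactly where the slack $\lVert B \rVert \leq m-2$ in Lemma~\ref{lem:Jaeger} is used — and a strict decrease in the number of bracket pairs), and bookkeeping the two chained cuts: one to wrap $\Delta$ back inside brackets via an axiom of $\mathcal{T}_{\mathcal{B},m}$, the other to reinsert the result into the context $\Gamma'[\contexthole]$.
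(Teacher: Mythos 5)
Your proposal is correct and follows essentially the same route as the paper's proof: the ``if'' direction via $\mathbf{L}\Diamond$-provability of the extra axioms $\langle A \rangle \seqarrow \Diamond A$ and $\langle \varbox A \rangle \seqarrow A$, and the ``only if'' direction by induction on the number of brackets in $\Gamma$, using Lemma~\ref{lem:without-brackets} in the bracket-free base case and Lemma~\ref{lem:Jaeger} plus the two chained cuts through $\langle B \rangle \seqarrow \Diamond B$ (resp.\ $\langle \varbox B \rangle \seqarrow B$) in the inductive step. Your extra bookkeeping of the side conditions ($B \in \Typ(\mathcal{B})$, $||\Diamond B||, ||\varbox B|| \leq m$, strict decrease in bracket count) only makes explicit what the paper leaves implicit.
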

\begin{proof}
Since $\vdash_{\mathbf{L}\Diamond} \langle A \rangle \seqarrow \Diamond A$ and $\vdash_{\mathbf{L}\Diamond} \langle \varbox A \rangle \seqarrow A$ for any $A$, the ``if'' direction is immediate.

For the ``only if'' direction, suppose $\vdash_{\mathbf{L}\Diamond} \Gamma \seqarrow A_{n+1}$.
We reason by induction on the number of occurrences of brackets in $\Gamma$.
If no bracket occurs in $\Gamma$, then $\Gamma = A_1 \dots A_n$ and it follows from Lemma~\ref{lem:without-brackets} that $\mathcal{T}_{\mathcal{B},m} \vdash_{\mathrm{Cut}} \Gamma \seqarrow A_{n+1}$.
If $\Gamma = \Gamma'[\langle \Delta \rangle]$, then we can apply Lemma~\ref{lem:Jaeger} and obtain a type $B \in \Typ(\mathcal{B})$ with $||B|| \leq m-2$ such that either (i) $\vdash_{\mathbf{L}\Diamond} \Delta \seqarrow B$ and $\vdash_{\mathbf{L}\Diamond} \Gamma'[\Diamond B] \seqarrow A_{n+1}$ or (ii) $\vdash_{\mathbf{L}\Diamond} \Delta \seqarrow \varbox B$ and $\vdash_{\mathbf{L}\Diamond} \Gamma'[B] \seqarrow A_{n+1}$.
Note that 
\[
\{ \Delta \seqarrow B,\, \Gamma'[\Diamond B] \seqarrow A_{n+1},\, \langle B \rangle \seqarrow \Diamond B \} \vdash_{\mathrm{Cut}} \Gamma'[\langle \Delta \rangle] \seqarrow A_{n+1}
\]
and
\[
\{ \Delta \seqarrow \varbox B,\, \Gamma'[B] \seqarrow A_{n+1},\, \langle \varbox B \rangle \seqarrow B \} \vdash_{\mathrm{Cut}} \Gamma'[\langle \Delta \rangle] \seqarrow A_{n+1}.
\]
In the case of (i), since both $\Delta \seqarrow B$ and $\Gamma'[\Diamond B] \seqarrow A_{n+1}$ contain fewer brackets than $\Gamma'[\langle \Delta \rangle] \seqarrow A_{n+1}$, the induction hypothesis implies that $\mathcal{T}_{\mathcal{B},m} \vdash_{\mathrm{Cut}} \Delta \seqarrow B$ and $\mathcal{T}_{\mathcal{B},m} \vdash_{\mathrm{Cut}} \Gamma'[\Diamond B] \seqarrow A_{n+1}$.
Since $\langle B \rangle \seqarrow \Diamond B$ is in $\mathcal{T}_{\mathcal{B},m}$, it follows that $\mathcal{T}_{\mathcal{B},m} \vdash_{\mathrm{Cut}} \Gamma'[\langle \Delta \rangle] \seqarrow A_{n+1}$.
Similarly, in the case of (ii), since both $\Delta \seqarrow \varbox B$ and $\Gamma'[B] \seqarrow A_{n+1}$ contain fewer brackets than $\Gamma'[\langle \Delta \rangle] \seqarrow A_{n+1}$, the induction hypothesis gives $\mathcal{T}_{\mathcal{B},m} \vdash_{\mathrm{Cut}} \Delta \seqarrow \varbox B$ and $\mathcal{T}_{\mathcal{B},m} \vdash_{\mathrm{Cut}} \Gamma'[B] \seqarrow A_{n+1}$.
Since $\langle \varbox B \rangle \seqarrow B$ is in $\mathcal{T}_{\mathcal{B},m}$, it follows that $\mathcal{T}_{\mathcal{B},m} \vdash_{\mathrm{Cut}} \Gamma'[\langle \Delta \rangle] \seqarrow A_{n+1}$.
\qed
\end{proof}

\begin{theorem}
\label{thm:LDiamond-context-free}
Every language recognized by\/ $\mathbf{L}\Diamond$ is context-free.
\end{theorem}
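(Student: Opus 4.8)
The plan is to convert the finite Cut\nobreakdash-axiomatization supplied by Lemma~\ref{lem:cut-completeness} into an ordinary context\nobreakdash-free grammar, in the same way Pentus passes from a finite Cut\nobreakdash-axiomatization of $\mathbf{L}$ to a CFG. Fix an $\mathbf{L}\Diamond$ grammar $G = (\Sigma, I, D)$. Let $\mathcal{B}$ be the finite set of primitive types occurring either in $D$ or in some type $A$ with $(a, A) \in I$, and let $m = \max(\{\,||D||\,\} \cup \{\, ||A|| : (a, A) \in I \,\})$. If $w = a_1 \dots a_n$ is generated by $G$, witnessed by a provable sequent $\Gamma \seqarrow D$ with yield $A_1 \dots A_n$ and $(a_i, A_i) \in I$, then every leaf type of $\Gamma$, as well as $D$, belongs to $\Typ(\mathcal{B})$ and has length at most $m$; so Lemma~\ref{lem:cut-completeness} applies and gives $\vdash_{\mathbf{L}\Diamond} \Gamma \seqarrow D$ iff $\mathcal{T}_{\mathcal{B},m} \vdash_{\mathrm{Cut}} \Gamma \seqarrow D$. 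Writing $\mathcal{I} = \{\, A : (a, A) \in I \text{ for some } a \in \Sigma \,\}$, it therefore suffices to prove that the language $R = \{\, \mathrm{yield}(\Gamma) \in \mathcal{I}\kleene : \vdash_{\mathbf{L}\Diamond} \Gamma \seqarrow D \,\}$ over the finite alphabet $\mathcal{I}$ is context\nobreakdash-free: $L(G)$ is then the image of $R$ under the finite substitution $A \mapsto \{\, a : (a, A) \in I \,\}$, and the context\nobreakdash-free languages are closed under finite substitution.

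To show $R$ is context\nobreakdash-free I would exhibit a CFG $G'$. Its nonterminals are fresh symbols $\overline{A}$, one for each $A \in \Typ(\mathcal{B})$ with $||A|| \leq m$ (finitely many); its terminals are the elements of $\mathcal{I}$; its start symbol is $\overline{D}$; and its productions are the following. For each sequent $A_1 \dots A_n \seqarrow A_{n+1}$ in $\mathcal{S}_{\mathcal{B},m}$ (necessarily $1 \le n \le 2$, since antecedents are never empty in $\mathbf{L}\Diamond$), a production $\overline{A}_{n+1} \to \overline{A}_1 \dots \overline{A}_n$. For each $A \in \Typ(\mathcal{B})$ with $||A|| \leq m-2$, the two unary productions $\overline{\Diamond A} \to \overline{A}$ and $\overline{A} \to \overline{\varbox A}$ obtained by erasing the brackets from the ``bracket axioms'' $\langle A \rangle \seqarrow \Diamond A$ and $\langle \varbox A \rangle \seqarrow A$ of $\mathcal{T}_{\mathcal{B},m}$. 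Finally, for each $A \in \mathcal{I}$, a production $\overline{A} \to A$. The point of routing through $\mathcal{T}_{\mathcal{B},m}$ rather than through $\mathbf{L}\Diamond$ directly is precisely that Cut\nobreakdash-derivations from a fixed finite set of sequents correspond to derivations in such a CFG once brackets are forgotten, and brackets do not affect yields.

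The identity $R = L(G')$ then follows from two straightforward inductions. For $L(G') \subseteq R$: by induction on the length of a $G'$\nobreakdash-derivation, if $\overline{B} \Rightarrow\kleene_{G'} B_1 \dots B_k$ with $B_1 \dots B_k$ a string of terminals, then there is a type hedge $\Pi$ with $\mathrm{yield}(\Pi) = B_1 \dots B_k$ and $\vdash_{\mathbf{L}\Diamond} \Pi \seqarrow B$ --- a production $\overline{A}_{n+1} \to \overline{A}_1 \dots \overline{A}_n$ being matched by $n$ applications of Cut against the provable sequent $A_1 \dots A_n \seqarrow A_{n+1}$, a step $\overline{\Diamond A} \to \overline{A}$ by one application of $({\seqarrow}\Diamond)$ (legitimate since antecedents are nonempty), a step $\overline{A} \to \overline{\varbox A}$ by one application of Cut against the provable sequent $\langle \varbox A \rangle \seqarrow A$, and $\overline{A} \to A$ being the base case, handled by $\vdash_{\mathbf{L}\Diamond} A \seqarrow A$. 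For $R \subseteq L(G')$: by induction on Cut\nobreakdash-derivations from $\mathcal{T}_{\mathcal{B},m}$, if $\mathcal{T}_{\mathcal{B},m} \vdash_{\mathrm{Cut}} \Pi \seqarrow B$ with $\mathrm{yield}(\Pi) = B_1 \dots B_k$, then $\overline{B} \Rightarrow\kleene_{G'} \overline{B_1} \dots \overline{B_k}$, the axioms of $\mathcal{T}_{\mathcal{B},m}$ being exactly the left-hand sides of the first two kinds of productions and each Cut being absorbed by substitution of one derivation into another; one then applies the productions $\overline{B_i} \to B_i$, available because every $B_i \in \mathcal{I}$ for strings of $R$, to reach $B_1 \dots B_k$. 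Combining, $R = L(G')$ is context\nobreakdash-free, hence so is $L(G)$.

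I expect the bulk of the remaining work to be bookkeeping rather than anything conceptual, since the real content has been pushed into Lemma~\ref{lem:cut-completeness} (and thence into Theorem~\ref{thm:interpolation}, Lemma~\ref{lem:Pentus}, and Lemma~\ref{lem:Jaeger}). The one point requiring a little attention beyond Pentus's argument for $\mathbf{L}$ is the handling of the bracket axioms: one must verify that replacing $\langle A \rangle \seqarrow \Diamond A$ and $\langle \varbox A \rangle \seqarrow A$ by the unary productions $\overline{\Diamond A} \to \overline{A}$ and $\overline{A} \to \overline{\varbox A}$ is both sound and complete at the level of yields, i.e. that the brackets a Cut\nobreakdash-derivation introduces never achieve anything a context\nobreakdash-free derivation cannot once brackets are erased. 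This is immediate from the definition of yield, but it is the only place in this final step where the bracket modalities intervene.
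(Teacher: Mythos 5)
Your proposal is correct and follows essentially the same route as the paper: invoke Lemma~\ref{lem:cut-completeness} and turn the finite axiom set $\mathcal{T}_{\mathcal{B},m}$ into a context-free grammar whose nonterminals are the types of $\Typ(\mathcal{B})$ of length at most $m$, with the bracket axioms becoming the unary productions $\Diamond A \to A$ and $A \to \varbox A$, and with the two straightforward inductions relating Cut-derivations to grammar derivations. The only (immaterial) difference is that you pass through an intermediate language over the alphabet of lexical types and close under finite substitution, whereas the paper simply adds the lexical productions $A \to a$ for $(a,A) \in I$ directly to $G'$.
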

\begin{proof}
Let $G = (\Sigma,I,D)$ be an $\mathbf{L}\Diamond$ grammar.
Let $\mathcal{B}$ be the set of primitive types used in $G$, and let
\[
m = \max(\{\, ||A|| \mid \text{$(a,A) \in I$ for some $a \in \Sigma$} \,\} \cup \{ ||D|| \}).
\]
Define a context-free grammar $G' = (N,\Sigma,P,D)$ by
\begin{align*}
N & = \{\, A \in \Typ(\mathcal{B}) \mid ||A|| \leq m \,\},\\
P & = 
\begin{aligned}[t]
& \{\, A_{n+1} \rightarrow A_1 \dots A_n \mid \text{$A_1 \dots A_n \seqarrow A_{n+1}$ is in $\mathcal{S}_{\mathcal{B},m}$} \,\} \cup {}\\
& \{\, \Diamond A \rightarrow A \mid \text{$A \in \Typ(\mathcal{B})$ and $||A|| \leq m-2$} \,\} \cup {}\\
& \{\, A \rightarrow \varbox A \mid \text{$A \in \Typ(\mathcal{B})$ and $||A|| \leq m-2$} \,\} \cup {}\\
& \{\, A \rightarrow a \mid (a,A) \in I \,\}.
\end{aligned}
\end{align*}
We prove that $G$ and $G'$ generate the same language.
It is clearly enough to prove that the following are equivalent whenever $A_i \in \Typ(\mathcal{B})$ and $||A_i|| \leq m$ for $i=1,\dots,n+1$:
\begin{enumerate}
\item[(i)]
$\vdash_{\mathbf{L}\Diamond} \Gamma \seqarrow A_{n+1}$ for some $\Gamma$ whose yield is $A_1 \dots A_n$.
\item[(ii)]
$A_{n+1} \Rightarrow_{G'}\kleene A_1 \dots A_n$.
\end{enumerate}
By Lemma~\ref{lem:cut-completeness}, (i) is equivalent to
\begin{enumerate}
\item[(i$'$)]
$\mathcal{T}_{\mathcal{B},m} \vdash_{\mathrm{Cut}} \Gamma \seqarrow A_{n+1}$ for some $\Gamma$ whose yield is $A_1 \dots A_n$.
\end{enumerate}
That (i$'$) implies (ii) is proved by straightforward induction on the number of applications of Cut to derive $\Gamma \seqarrow A_{n+1}$ from $\mathcal{T}_{\mathcal{B},m}$.
The converse implication is proved by equally straightforward induction on the length of the derivation of $A_{n+1} \Rightarrow_{G'}\kleene A_1 \dots A_n$.
\qed
\end{proof}

\section{The Calculus $\mathbf{L}\kleene\Diamond$}
\label{sec:L*b}

The calculus $\mathbf{L}\kleene\Diamond$ consists of the rules of $\mathbf{L}\Diamond$ without the restriction on $({\seqarrow}\bs)$ and $({\seqarrow}/)$.
The multimodal variant is $\mathbf{L}\kleene\Diamond_{\mathrm{m}}$.
The method of Sections~\ref{sec:multimodal} and~\ref{sec:LDiamond} is not directly applicable to $\mathbf{L}\kleene\Diamond$.
This is because the interpolation theorem (Theorem~\ref{thm:interpolation}) does not hold of $\mathbf{L}\kleene\Diamond_{\mathrm{m}}$ (or of $\mathbf{L}\kleene\Diamond$, for that matter).
For example, we have
\begin{equation}
\label{eqn:no-interpolant}
\vdash_{\mathbf{L}\kleene\Diamond_{\mathrm{m}}} p_3/\Diamond_1(p_1 \bullet \Diamond_2(p_2/p_2)) \concat \langle_1 p_1 \concat \langle_2 \rangle_2 \rangle_1 \seqarrow p_3,
\end{equation}
but there is no type $E$ such that
\begin{gather*}
\vdash_{\mathbf{L}\kleene\Diamond_{\mathrm{m}}} \langle_1 p_1 \concat \langle_2 \rangle_2 \rangle_1 \seqarrow E,\\
\vdash_{\mathbf{L}\kleene\Diamond_{\mathrm{m}}} p_3/\Diamond_1(p_1 \bullet \Diamond_2(p_2/p_2)) \concat E \seqarrow p_3,\\
\text{$\sigma_1(E) \leq 1$ and $\sigma_i(E) = 0$ for $i \geq 2$,}\\
\text{$\tau_1(E) \leq 1$, $\tau_2(E) \leq 1$, and $\tau_i(E) = 0$ for $i \geq 3$.}
\end{gather*}
To see this, note that Lemma~\ref{lem:free-group} holds of $\mathbf{L}\kleene\Diamond_{\mathrm{m}}$ as well and implies $\den{E} = \langle_1 p_1 \langle_2 \rangle_2 \rangle_1$, but $E$ can contain no more than one occurrence of an atomic type.
This is clearly impossible.

We can restore interpolation by adding the type constant $\bm{1}$ (the \emph{unit}) to the $\mathbf{L}\Diamond$ and $\mathbf{L}\Diamond_{\mathrm{m}}$ types, governed by the rules
\[
\infer[(\bm{1}{\seqarrow})]{\Gamma[\bm{1}] \seqarrow A}{
	\Gamma[] \seqarrow A
}
\qquad
\infer[({\seqarrow}\bm{1})]{\seqarrow \bm{1}}{}
\]
In $(\bm{1}{\seqarrow})$, $\Gamma[]$ is the result of replacing $\contexthole$ in $\Gamma[\contexthole]$ by the empty type hedge.
The resulting calculi are referred to as $\mathbf{L}\kleene_{\bm{1}}\Diamond$ and $\mathbf{L}\kleene_{\bm{1}}\Diamond_{\mathrm{m}}$.
(\cite{Pentus:1999} referred to the calculus $\mathbf{L}\kleene$ enriched with the unit as $\mathbf{L}\kleene_{\bm{1}}$.)
The types used in these calculi are the elements of $\Typ(\Pri \cup \{ \bm{1} \})$ and of $\Typ_{\mathrm{m}}(\Pri \cup \{ \bm{1} \})$, respectively.
Cut elimination holds of these calculi.%
\footnote{To extend Moortgat's \citeyearpar{Moortgat:1996} proof in the presence of $\bm{1}$, one only need to add the reduction step
\[
\infer[\text{Cut}]{\Gamma[] \seqarrow A}{
	\infer[({\seqarrow}\bm{1})]{\seqarrow \bm{1}}{}
	&
	\infer[(\bm{1}{\seqarrow})]{\Gamma[\bm{1}] \seqarrow A}{
		\infer*{\Gamma[] \seqarrow A}{}
	}
}
\quad
\rightsquigarrow
\quad
\infer*{\Gamma[] \seqarrow A}{}
\]
}

\begin{theorem}
\label{thm:interpolation-unit}
If\/ $\vdash_{\mathbf{L}\kleene_{\bm{1}}\Diamond_{\mathrm{m}}} \Gamma[\Delta] \seqarrow C$, then there is a type\/ $E$ such that
\begin{enumerate}
\item[\upshape (i)]
$\vdash_{\mathbf{L}\kleene_{\bm{1}}\Diamond_{\mathrm{m}}} \Delta \seqarrow E$,
\item[\upshape (ii)]
$\vdash_{\mathbf{L}\kleene_{\bm{1}}\Diamond_{\mathrm{m}}} \Gamma[E] \seqarrow C$,
\item[\upshape (iii)]
$\sigma_i(E) \leq \min(\sigma_i(\Delta),\sigma_i(\Gamma[\contexthole] \seqarrow C))$ for each\/ $i$,
\item[\upshape (iv)]
$\tau_i(E) \leq \min(\tau_i(\Delta),\tau_i(\Gamma[\contexthole] \seqarrow C))$ for each\/ $i$.
\end{enumerate}
\end{theorem}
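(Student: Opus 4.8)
The plan is to re-run the induction of Theorem~\ref{thm:interpolation} on the cut-free proof $\mathcal{D}$ of $\Gamma[\Delta] \seqarrow C$, adding a single device to cope with the fact that $\Delta$ is now allowed to be empty: whenever the selected region is empty I take $E = \bm{1}$. In that case $\vdash_{\mathbf{L}\kleene_{\bm{1}}\Diamond_{\mathrm{m}}} {\seqarrow}\bm{1}$ by $({\seqarrow}\bm{1})$, and since $\Gamma[\Delta] = \Gamma[]$ the hypothesis $\vdash_{\mathbf{L}\kleene_{\bm{1}}\Diamond_{\mathrm{m}}} \Gamma[] \seqarrow C$ yields $\vdash_{\mathbf{L}\kleene_{\bm{1}}\Diamond_{\mathrm{m}}} \Gamma[\bm{1}] \seqarrow C$ by $(\bm{1}{\seqarrow})$; conditions (iii) and (iv) are immediate because $\sigma_i(\bm{1}) = \tau_i(\bm{1}) = 0$. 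This disposes of the empty case regardless of the last rule of $\mathcal{D}$, so for the rest of the argument I may assume $\Delta$ is nonempty.

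Under that assumption every one of Cases~1--11 in the proof of Theorem~\ref{thm:interpolation} goes through verbatim, with the same interpolants and the same auxiliary observations (e.g.\ that $\vdash A \seqarrow E$ implies $\vdash \varbox_i A \seqarrow \varbox_i E$). Two minor simplifications occur: the side remark in Case~9 that ``$\Gamma$ cannot be empty'' is no longer needed, since the induction hypothesis now also covers an empty selected region; and the removal of the nonemptiness restriction on $({\seqarrow}\bs)$ and $({\seqarrow}/)$ is harmless, because the selected region in those rules is either nonempty---and then treated exactly as in Cases~3 and~5---or empty, in which case it has already been handled. It then remains to add the two cases in which $\mathcal{D}$ ends in one of the new rules. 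If $\mathcal{D}$ ends in $({\seqarrow}\bm{1})$ the antecedent $\Gamma[\Delta]$ is empty, so $\Delta$ is empty and we are in the empty case. If $\mathcal{D}$ ends in $(\bm{1}{\seqarrow})$, with conclusion $\Gamma[\Delta] \seqarrow C$ obtained from the premise by deleting a principal occurrence of $\bm{1}$, I branch on whether that occurrence lies inside the selected region $\Delta$ or inside the surrounding context $\Gamma[\contexthole]$. In the first subcase, let $\Delta^{-}$ be $\Delta$ with the principal $\bm{1}$ removed (possibly empty); applying the induction hypothesis to $\Gamma[\Delta^{-}] \seqarrow C$ with context $\Gamma[\contexthole]$ gives a type $E$ with $\vdash \Delta^{-} \seqarrow E$, whence $\vdash \Delta \seqarrow E$ by $(\bm{1}{\seqarrow})$, and (ii)--(iv) are inherited unchanged since the deleted $\bm{1}$ contributes nothing to $\sigma_i$ or $\tau_i$. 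In the second subcase, let $\Gamma^{-}[\contexthole]$ be $\Gamma[\contexthole]$ with the principal $\bm{1}$ removed; applying the induction hypothesis to $\Gamma^{-}[\Delta] \seqarrow C$ gives $E$ with $\vdash \Gamma^{-}[E] \seqarrow C$, whence $\vdash \Gamma[E] \seqarrow C$ by $(\bm{1}{\seqarrow})$, and (i), (iii), (iv) are inherited because $\sigma_i$ and $\tau_i$ of the context part are unaffected.

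There is no conceptual obstacle: the argument is a mechanical extension of Theorem~\ref{thm:interpolation}, and the only real work is the bookkeeping just sketched for $(\bm{1}{\seqarrow})$ together with checking, in each inherited case, that the recursive calls remain legitimate once empty antecedents and empty selected regions are permitted. It is worth noting that the unit is genuinely necessary here: the failure recorded in~(\ref{eqn:no-interpolant}) is exactly the obstruction that the empty-region interpolant $\bm{1}$ removes (for that example one obtains the interpolant $\Diamond_1(p_1 \bullet \Diamond_2 \bm{1})$, whose free-group image is $\den{\langle_1 p_1 \concat \langle_2 \rangle_2 \rangle_1}$, as required).
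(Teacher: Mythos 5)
Your proof is correct and follows essentially the same route as the paper: rerun the induction of Theorem~\ref{thm:interpolation}, take $E=\bm{1}$ when the selected region is empty (which also covers the $(\bm{1}{\seqarrow})$/$({\seqarrow}\bm{1})$ endings), and let the remaining cases go through unchanged. The paper states this in two lines; your version merely spells out the bookkeeping the paper leaves implicit.
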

\begin{proof}
Two new cases are handled as follows.
When $\Delta$ is the empty hedge, then we let $E = \bm{1}$.
When $\Delta = \bm{1}$ is introduced by $(\bm{1}{\seqarrow})$ at the last step, then we again let $E = \bm{1}$.
\qed
\end{proof}
For example, we can take $E = \Diamond_1(p_1 \bullet \Diamond_2 \bm{1})$ as the interpolant for the above example (\ref{eqn:no-interpolant}):%
\newlength{\temp}\settowidth{\temp}{$\langle$}
\[
\infer[(/{\seqarrow})]{p_3/\Diamond_1(p_1 \bullet \Diamond_2(p_2/p_2)) \concat \selected{\langle_1 p_1 \concat \langle_2 \rangle_2 \rangle_1} \stackrel{\Diamond_1(p_1 \bullet \Diamond_2 \bm{1})}{\seqarrow} p_3}{
	\infer[({\seqarrow}\Diamond_1)]{\selected{\langle_1 p_1 \concat \langle_2 \rangle_2 \rangle_1} \stackrel{\Diamond_1(p_1 \bullet \Diamond_2 \bm{1})}{\seqarrow} \Diamond_1(p_1 \bullet \Diamond_2(p_2/p_2))}{
		\infer[({\seqarrow}\bullet)]{\selected{p_1 \concat \langle_2 \rangle_2} \stackrel{p_1 \bullet \Diamond_2 \bm{1}}{\seqarrow} p_1 \bullet \Diamond_2(p_2/p_2)}{
			\selected{\phantom{\langle\hspace{-\temp}}p_1} \stackrel{p_1}{\seqarrow} p_1
			&
			\infer[({\seqarrow}\Diamond_2)]{\selected{\langle_2 \rangle_2} \stackrel{\Diamond_2 \bm{1}}{\seqarrow} \Diamond_2(p_2/p_2)}{
				\infer[({\seqarrow}/)]{\selected{\phantom{\langle\hspace{-\temp}}} \stackrel{\bm{1}}{\seqarrow} p_2/p_2}{
					\selected{\phantom{\langle\hspace{-\temp}}} \concat p_2 \stackrel{\bm{1}}{\seqarrow} p_2
				}
			}
		}
	}
	&
	p_3 \seqarrow p_3
}
\]

Naturally, we take $\den{\bm{1}}$ to be the identity element of the free group generated by $\Pri \cup \{\, {\langle}_i \mid i \geq 1 \,\} \cup \{\, {\rangle}_i \mid i \geq 1 \,\}$ so that Lemma~\ref{lem:free-group} continues to hold for $\mathbf{L}\kleene_{\bm{1}}\Diamond_{\mathrm{m}}$.
If we let $||\bm{1}|| = 0$ in the definition of $||A||$ for $\mathbf{L}\kleene_{\bm{1}}\Diamond_{\mathrm{m}}$ types, then whenever $\Gamma[\Delta] \seqarrow C$ is a thin sequent we again have equation (\ref{eqn:thin-free-group}) for the interpolant $E$ for this sequent.
Lemmas~\ref{lem:without-brackets}, \ref{lem:Jaeger}, and~\ref{lem:cut-completeness} continue to hold \emph{mutatis mutandis} for $\mathbf{L}\kleene_{\bm{1}}\Diamond$.
This does \emph{not}, however, imply that $\mathbf{L}\kleene_{\bm{1}}\Diamond$ (or $\mathbf{L}\kleene\Diamond$) only recognizes context-free languages.
The pitfall is that the sets $\mathcal{S}_{\mathcal{B},m}$ and $\mathcal{T}_{\mathcal{B},m}$ with $\Typ(\mathcal{B})$ replaced by $\Typ(\mathcal{B} \cup \{ \bm{1} \})$ are both infinite, since the conditions $||A_i|| \leq m$ and $||A|| \leq m-2$ in the definition of these sets place no bound on the number of occurrences of $\bm{1}$.

For instance, define types $A_i$ ($i=0,1,2,\dots$) by
\begin{align*}
A_0 & = q,\\
A_{i+1} & = (\bm{1}/A_i) \bs \bm{1},
\end{align*}
where $q$ is a primitive type.
It is easy to show by induction on $j$ that $\not\vdash_{\mathbf{L}\kleene_{\bm{1}}} A_i \seqarrow A_j$ whenever $i > j$.
So these are pairwise inequivalent types, but $||A_i|| = 1$ for all $i$.

We can see that the types $A_i$ even arise as interpolants for sequents consisting only of very short types.
Consider the cut-free proof:
\[
\infer[(\bs{\seqarrow})]{(\bm{1}/\bm{1})^{i-1} \concat \bm{1}/q \concat q \concat (\bm{1} \bs \bm{1})^i \seqarrow \bm{1}}{
	\infer[(/{\seqarrow})]{(\bm{1}/\bm{1})^{i-1} \concat \bm{1}/q \concat q \concat (\bm{1} \bs \bm{1})^{i-1} \seqarrow \bm{1}}{
		\infer*{(\bm{1}/\bm{1})^{i-2} \concat \bm{1}/q \concat q \concat (\bm{1} \bs \bm{1})^{i-1} \seqarrow \bm{1}}{
			\infer[(\bs{\seqarrow})]{\bm{1}/q \concat q \concat \bm{1} \bs \bm{1} \seqarrow \bm{1}}{
				\infer[(/{\seqarrow})]{\bm{1}/q \concat q \seqarrow \bm{1}}{
					q \seqarrow q
					&
					\infer[(\bm{1}{\seqarrow})]{\bm{1} \seqarrow \bm{1}}{
						\infer[({\seqarrow}\bm{1})]{\seqarrow \bm{1}}{
						}
					}
				}
				&
				\infer[(\bm{1}{\seqarrow})]{\bm{1} \seqarrow \bm{1}}{
					\infer[({\seqarrow}\bm{1})]{\seqarrow \bm{1}}{
					}
				}
			}
		}
		&
		\infer[(\bm{1}{\seqarrow})]{\bm{1} \seqarrow \bm{1}}{
			\infer[({\seqarrow}\bm{1})]{\seqarrow \bm{1}}{
			}
		}
	}
	&
	\infer[(\bm{1}{\seqarrow})]{\bm{1} \seqarrow \bm{1}}{
		\infer[({\seqarrow}\bm{1})]{\seqarrow \bm{1}}{
		}
	}
}
\]
The interpolant for $(\bm{1}/\bm{1})^{i-1} \concat \bm{1}/q \concat q \concat (\bm{1} \bs \bm{1})^i \seqarrow \bm{1}$ with respect to the partition $(q \concat (\bm{1} \bs \bm{1})^i; (\bm{1}/\bm{1})^{i-1} \concat \bm{1}/q \concat \contexthole)$ is computed from this proof by the method of Theorem~\ref{thm:interpolation-unit} as follows:
\[
\infer[(\bs{\seqarrow})]{(\bm{1}/\bm{1})^{i-1} \concat \bm{1}/q \concat \selected{q \concat (\bm{1} \bs \bm{1})^i} \stackrel{(\bm{1}/A_{i-1}) \bs \bm{1}}{\seqarrow} \bm{1}}{
	\infer[(/{\seqarrow})]{\selected{(\bm{1}/\bm{1})^{i-1} \concat \bm{1}/q} \concat q \concat (\bm{1} \bs \bm{1})^{i-1} \stackrel{\bm{1}/A_{i-1}}{\seqarrow} \bm{1}}{
		\infer*{(\bm{1}/\bm{1})^{i-2} \concat \bm{1}/q \concat \selected{q \concat (\bm{1} \bs \bm{1})^{i-1}} \stackrel{A_{i-1}}{\seqarrow} \bm{1}}{
			\infer[(\bs{\seqarrow})]{\bm{1}/q \concat \selected{q \concat \bm{1} \bs \bm{1}} \stackrel{(\bm{1}/q) \bs \bm{1}}{\seqarrow} \bm{1}}{
				\infer[(/{\seqarrow})]{\selected{\bm{1}/q} \concat q \stackrel{\bm{1}/q}{\seqarrow} \bm{1}}{
					\selected{q} \stackrel{q}{\seqarrow} q
					&
					\infer[(\bm{1}{\seqarrow})]{\selected{\bm{1}} \stackrel{\bm{1}}{\seqarrow} \bm{1}}{
						\infer[({\seqarrow}\bm{1})]{\seqarrow \bm{1}}{
						}
					}
				}
				&
				\infer[(\bm{1}{\seqarrow})]{\selected{\bm{1}} \stackrel{\bm{1}}{\seqarrow} \bm{1}}{
					\infer[({\seqarrow}\bm{1})]{\seqarrow \bm{1}}{
					}
				}
			}
		}
		&
		\infer[(\bm{1}{\seqarrow})]{\selected{\bm{1}} \stackrel{\bm{1}}{\seqarrow} \bm{1}}{
			\infer[({\seqarrow}\bm{1})]{\seqarrow \bm{1}}{
			}
		}
	}
	&
	\infer[(\bm{1}{\seqarrow})]{\selected{\bm{1}} \stackrel{\bm{1}}{\seqarrow} \bm{1}}{
		\infer[({\seqarrow}\bm{1})]{\seqarrow \bm{1}}{
		}
	}
}
\]

In the above computation, the type $A_i$ is obtained as the interpolant for a sequent with $2i+1$ types in the antecedent with respect to a partition that splits the antecedent into strings of types of roughly equal length.
Alternatively, $A_1,\dots,A_i$ may be obtained from the same sequent by iterating the computation of interpolants, as follows:
\begin{gather*}
(\bm{1}/\bm{1})^{i-1} \concat \bm{1}/q \concat \selected{q \concat \bm{1} \bs \bm{1}} \concat (\bm{1} \bs \bm{1})^{i-1} \stackrel{A_1}{\seqarrow} \bm{1}\\
(\bm{1}/\bm{1})^{i-1} \concat \bm{1}/q \concat \selected{A_1 \concat \bm{1} \bs \bm{1}} \concat (\bm{1} \bs \bm{1})^{i-2} \stackrel{A_2}{\seqarrow} \bm{1}\\
\vdots\\
(\bm{1}/\bm{1})^{i-1} \concat \bm{1}/q \concat \selected{A_{i-1} \concat \bm{1} \bs \bm{1}} \stackrel{A_i}{\seqarrow} \bm{1}
\end{gather*}
In this list of sequents, the ``boxed'' part always consists of two types.
A cut-free proof of each sequent in the list (except the first) is obtained through the computation of the interpolant for the preceding sequent in the list and looks as follows:
\[
\infer*{(\bm{1}/\bm{1})^{i-1} \concat \bm{1}/q \concat \selected{A_j \concat \bm{1} \bs \bm{1}} \concat (\bm{1} \bs \bm{1})^{i-j-1} \stackrel{(\bm{1}/A_j) \bs \bm{1}}{\seqarrow} \bm{1}}{
	\infer[(\bs{\seqarrow})]{(\bm{1}/\bm{1})^{j+1} \concat \bm{1}/q \concat \selected{A_j \concat \bm{1} \bs \bm{1}} \concat \bm{1} \bs \bm{1} \stackrel{(\bm{1}/A_j) \bs \bm{1}}{\seqarrow} \bm{1}}{
		\infer[(/{\seqarrow})]{(\bm{1}/\bm{1})^{j+1} \concat \bm{1}/q \concat \selected{A_j \concat \bm{1} \bs \bm{1}} \stackrel{(\bm{1}/A_j) \bs \bm{1}}{\seqarrow} \bm{1}}{
			\infer[(\bs{\seqarrow})]{(\bm{1}/\bm{1})^j \concat \bm{1}/q \concat \selected{A_j \concat \bm{1} \bs \bm{1}} \stackrel{(\bm{1}/A_j) \bs \bm{1}}{\seqarrow} \bm{1}}{
				\infer[(/{\seqarrow})]{\selected{(\bm{1}/\bm{1})^j \concat \bm{1}/q} \concat A_j \stackrel{\bm{1}/A_j}{\seqarrow} \bm{1}}{
					\infer*{(\bm{1}/\bm{1})^{j-1} \concat \bm{1}/q \concat \selected{A_j} \stackrel{A_j}{\seqarrow} \bm{1}}{}
					&
					\infer[(\bm{1}{\seqarrow})]{\selected{\bm{1}} \stackrel{\bm{1}}{\seqarrow} \bm{1}}{
						\infer[({\seqarrow}\bm{1})]{\seqarrow \bm{1}}{}
					}
				}
				&
				\infer[(\bm{1}{\seqarrow})]{\selected{\bm{1}} \stackrel{\bm{1}}{\seqarrow} \bm{1}}{
					\infer[({\seqarrow}\bm{1})]{\seqarrow \bm{1}}{}
				}
			}
			&
			\infer[(\bm{1}{\seqarrow})]{\bm{1} \seqarrow \bm{1}}{
				\infer[({\seqarrow}\bm{1})]{\seqarrow \bm{1}}{}
			}
		}
		&
		\infer[(\bm{1}{\seqarrow})]{\bm{1} \seqarrow \bm{1}}{
			\infer[({\seqarrow}\bm{1})]{\seqarrow \bm{1}}{}
		}
	}
}
\]

The above consideration shows that even the proof of context-freeness of $\mathbf{L}\kleene_{\bm{1}}$ requires further arguments than \cite{Pentus:1999} indicated; his brief remark \citep[Remark 5.13]{Pentus:1999} that the arguments used for the Lambek calculus $\mathbf{L}$ ``hold also for the Lambek calculus with the unit and the calculus $\mathbf{L}\kleene$'' and consequently ``the class of languages generated by categorial grammars based on any of these calculi coincides with the class of all context-free languages'' is not justified.%
\footnote{Pentus's \citeyearpar{Pentus:1999} claim of context-freeness of $\mathbf{L}\kleene$, as opposed to $\mathbf{L}\kleene_{\bm{1}}$, is immune to this criticism since an interpolation theorem similar to Theorem~\ref{thm:interpolation} does hold for $\mathbf{L}\kleene$ and there's no need to use $\bm{1}$ in converting an $\mathbf{L}\kleene$ grammar to a context-free grammar.
The same criticism does apply to his claim about grammars based on multiplicative cyclic linear logic (CLL).}
For this reason, \cite{Kuznetsov:2012} relied on a translation from $\mathbf{L}\kleene_{\bm{1}}$ sequents to $\mathbf{L}\kleene$ sequents to show that $\mathbf{L}\kleene_{\bm{1}}$ only recognizes context-free languages.

Let us return to our original concern.
We have seen that interpolation for $\mathbf{L}\kleene\Diamond_{\mathrm{m}}$ sequents generally requires the use of $\bm{1}$, but Pentus's method does not directly apply to the calculus containing $\bm{1}$, at least not without significant modifications.
Fortunately, however, we do not need the full power of $\mathbf{L}\kleene_{\bm{1}}\Diamond_{\mathrm{m}}$ for the purpose of proving the context-freeness of $\mathbf{L}\kleene\Diamond$.
The unit $\bm{1}$ \emph{is} needed, but its use can be limited to occurrences as the immediate subtype of a type of the form $\Diamond_i \bm{1}$.
We call elements of $\Typ_{\mathrm{m}}(\Pri \cup \{\, \Diamond_i \bm{1} \mid i \geq 1 \,\})$ or of $\Typ(\Pri \cup \{ \Diamond \bm{1} \})$ \emph{guarded} types.
We can prove the following:

\begin{theorem}
\label{thm:interpolation-unit-guarded}
Let\/ $\Gamma[\Delta] \seqarrow C$ be an\/ $\mathbf{L}\kleene_{\bm{1}}\Diamond_{\mathrm{m}}$ sequent such that the types occurring in it are all guarded and\/ $\Delta$ is a nonempty hedge.
If\/ $\vdash_{\mathbf{L}\kleene_{\bm{1}}\Diamond_{\mathrm{m}}} \Gamma[\Delta] \seqarrow C$, then there is a guarded type\/ $E$ such that
\begin{enumerate}
\item[\upshape (i)]
$\vdash_{\mathbf{L}\kleene_{\bm{1}}\Diamond_{\mathrm{m}}} \Delta \seqarrow E$,
\item[\upshape (ii)]
$\vdash_{\mathbf{L}\kleene_{\bm{1}}\Diamond_{\mathrm{m}}} \Gamma[E] \seqarrow C$,
\item[\upshape (iii)]
$\sigma_i(E) \leq \min(\sigma_i(\Delta),\sigma_i(\Gamma[\contexthole] \seqarrow C))$ for each\/ $i$,
\item[\upshape (iv)]
$\tau_i(E) \leq \min(\tau_i(\Delta),\tau_i(\Gamma[\contexthole] \seqarrow C))$ for each\/ $i$.
\end{enumerate}
\end{theorem}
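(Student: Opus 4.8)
The plan is to rerun the induction behind Theorem~\ref{thm:interpolation}, augmented by the two extra cases from the proof of Theorem~\ref{thm:interpolation-unit}, on the given cut-free proof of $\Gamma[\Delta] \seqarrow C$, keeping the notation $\Gamma[\selected{\Delta}] \stackrel{E}{\seqarrow} C$ and the same case split on the last rule. For every rule of $\mathbf{L}\Diamond_{\mathrm{m}}$ the interpolant is built exactly as before, and the only new thing to check is that guardedness is preserved: in each such case the interpolant of the conclusion is assembled from those of the premises using nothing but $\bs$, $/$, $\bullet$, $\Diamond_i$, $\varbox_i$ (with a primitive type in the base case), and $\Typ_{\mathrm{m}}(\Pri \cup \{\, \Diamond_i \bm{1} \mid i \geq 1 \,\})$ is closed under all of these. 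Thus all the real work is confined to the cases $({\seqarrow}\bm{1})$ and $(\bm{1}{\seqarrow})$, and to ruling out the bare unit $\bm{1}$ --- which is not guarded --- as the interpolant we must hand back.

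The case $({\seqarrow}\bm{1})$ has empty antecedent, hence no partition with nonempty $\Delta$, so there is nothing to prove. In the case $(\bm{1}{\seqarrow})$, with conclusion $\Gamma[\Gamma''[\bm{1}]] \seqarrow C$ from $\Gamma[\Gamma''[\,]] \seqarrow C$, if the selected hedge $\Delta$ either misses the displayed $\bm{1}$ or contains it while being strictly larger, then the interpolant found for the premise serves verbatim for the conclusion: conditions (i), (ii) are re-established by one application of $(\bm{1}{\seqarrow})$ on the appropriate side, and (iii), (iv) are untouched since $\bm{1}$ contributes nothing to any $\sigma_i$ or $\tau_i$. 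The only remaining subcase is $\Delta = \bm{1}$, the very occurrence just introduced, where one is forced to take $E = \bm{1}$. To handle this I would invoke a structural observation about the proofs in play: by the subformula property, a cut-free proof all of whose end-sequent types are guarded contains only guarded types and the constant $\bm{1}$, and a standalone $\bm{1}$ can only be released from a subtype $\Diamond_i \bm{1}$ by $(\Diamond_i{\seqarrow})$ and can only be reabsorbed into $\Diamond_i \bm{1}$ again (none of $\bm{1}\bullet B$, $B\bullet\bm{1}$, $\varbox_i\bm{1}$, \dots{} being a subformula of a guarded type); consequently a standalone $\bm{1}$ never occurs among siblings --- it is always either the sole content of a bracket pair or the entire antecedent.

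This observation has two payoffs. First, the combining subcases of the construction (those returning $E \bs F$, $E/F$, or $E \bullet F$) fire only when the two split-off sub-hedges are both nonempty, and by the observation neither is a bare standalone $\bm{1}$, so inductively both $E$ and $F$ are guarded and hence so is the combined interpolant. Second, a bare $\bm{1}$ can arise as an interpolant only for a partition whose selected hedge is empty or is a bare standalone $\bm{1}$, and any such $\bm{1}$ gets wrapped into the guarded type $\Diamond_i \bm{1}$ on the way back toward the root --- when a bracket pair $\langle_i \bm{1} \rangle_i$ is packed by $(\Diamond_i{\seqarrow})$, or when a $\bm{1}$-antecedent sequent becomes the premise of $({\seqarrow}\Diamond_i)$ (which, one checks, is the only way such a sequent can be used apart from being the empty-side premise of $({\seqarrow}\bullet)$, so the wrapping always eventually happens, Case~9 first subcase). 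Accordingly I would prove the sharper invariant: for every partition of every sequent in the proof, the interpolant is $\bm{1}$ exactly when the selected hedge is empty or a bare standalone $\bm{1}$, and a guarded type otherwise. The stated theorem is then the special case in which $\Delta$ is a nonempty hedge of guarded types, which --- $\bm{1}$ not being a guarded type --- falls under ``otherwise'' and so yields a guarded $E$. The main obstacle is exactly this structural analysis: carefully locating every possible occurrence of $\bm{1}$ in a cut-free proof of a guarded sequent and confirming that the interpolation procedure never drags a bare $\bm{1}$ into a slot where it would have to be a genuine interpolant for a guarded selected hedge; once that is secured, the remaining case checks are as routine as those for Theorems~\ref{thm:interpolation} and~\ref{thm:interpolation-unit}.
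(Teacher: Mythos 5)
Your overall strategy coincides with the paper's (whose entire proof consists of the clause: when $\Delta = \langle_i\,\rangle_i$ or $\Delta = \Diamond_i\bm{1}$, take $E = \Diamond_i\bm{1}$, ``the rest as before''), so the issue is only with the supporting analysis you supply. That analysis rests on a structural observation that is false. Counterexample: the guarded sequent $\Diamond_1\bm{1} \seqarrow \Diamond_1(p \bs p)$ has the cut-free proof which starts from $p \seqarrow p$, applies $(\bm{1}{\seqarrow})$ to get $p \concat \bm{1} \seqarrow p$, then $({\seqarrow}\bs)$ to get $\bm{1} \seqarrow p \bs p$, then $({\seqarrow}\Diamond_1)$ and $(\Diamond_1{\seqarrow})$. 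In $p \concat \bm{1} \seqarrow p$ the standalone $\bm{1}$ has the sibling $p$: once the bracket around a released $\bm{1}$ has been opened by $({\seqarrow}\Diamond_i)$ (read upward), the right rules $({\seqarrow}\bs), ({\seqarrow}/)$ freely add siblings to it. For the same reason your parenthetical ``one checks'' claim fails: in a cut-free proof of the guarded sequent $\Diamond_1\bm{1} \seqarrow \Diamond_1\bigl(q/((p\bs p)\bs q)\bigr)$, the sequent $\bm{1} \seqarrow p\bs p$ occurs as the \emph{left premise of} $(\bs{\seqarrow})$, so that $\bm{1} \concat (p\bs p)\bs q \seqarrow q$ occurs with a bare $\bm{1}$ standing immediately to the left of a principal type.

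Consequently both ``payoffs'' are unproved, and your sharper invariant is false in the global form you state it (``for every partition of every sequent in the proof''): for $\bm{1} \concat (p\bs p)\bs q \seqarrow q$ with selected hedge $(p\bs p)\bs q$ (nonempty, guarded), the construction is forced into the combining subcase of $(\bs{\seqarrow})$ with $\Gamma' = \bm{1}$, the left component is forced (already by conditions (iii) and (iv)) to be a pure-$\bm{1}$ type, and the procedure returns $\bm{1} \bs q$, which is neither $\bm{1}$ nor guarded. What actually has to be established is a property of the partitions \emph{visited} by the recursion launched from the guarded end-sequent partition, not of all partitions of all sequents: for instance, that a standalone $\bm{1}$ is created only as the sole content of a bracket pair, that the selected region never cuts into such a bracket, and that once the $\bm{1}$ is exposed as the whole antecedent it coincides with the selected region on the thread that carries it, so that no combining subcase ever isolates an empty hedge or a bare $\bm{1}$ as a glued component (empty pieces being dodged by choosing the non-combining subcase, and the pieces $\langle_i\,\rangle_i$, $\Diamond_i\bm{1}$ by the clause $E = \Diamond_i\bm{1}$). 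By asserting a false global structural fact in place of this tracking of visited partitions, your proposal leaves precisely the crux of the theorem unestablished.
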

\begin{proof}
When $\Delta = \langle_i \rangle_i$ or $\Delta = \Diamond_i \bm{1}$, we let $E = \Diamond_i \bm{1}$.
The rest of the proof proceeds as before.
\qed
\end{proof}

If $A$ is a guarded type with $||A|| \leq m$, then there cannot be more than $\lfloor m/2 \rfloor$ occurrences of $\bm{1}$ in it.
It follows that for any finite set $\mathcal{B}$ of primitive types, the set of types $A$ in $\Typ(\mathcal{B} \cup \{ \Diamond \bm{1} \})$ such that $||A|| \leq m$ is finite.
This means that we can modify the Pentus construction by using guarded types only.

Define
\begin{align*}
\mathcal{S}'_{\mathcal{B},m} & = \{\, A_1 \dots A_n \seqarrow A_{n+1} \mid 
\begin{aligned}[t]
& n \leq 2,\\
& \text{$A_i \in \Typ(\mathcal{B} \cup \{ \Diamond \bm{1} \})$ and $||A_i|| \leq m$ $(1 \leq i \leq n+1)$,}\\
& {\vdash_{\mathbf{L}\kleene_{\bm{1}}\Diamond} A_1 \dots A_n \seqarrow A_{n+1}} \,\},
\end{aligned}
\\
\mathcal{T}'_{\mathcal{B},m} & = 
\begin{aligned}[t]
& \mathcal{S}'_{\mathcal{B},m} \cup {}\\
& \{ \langle \rangle \seqarrow \Diamond \bm{1} \} \cup {}\\
& \{\, \langle A \rangle \seqarrow \Diamond A \mid A \in \Typ(\mathcal{B} \cup \{ \Diamond \bm{1} \}), ||A|| \leq m-2 \,\} \cup {}\\
& \{\, \langle \varbox A \rangle \seqarrow A \mid A \in \Typ(\mathcal{B} \cup \{ \Diamond \bm{1} \}), ||A|| \leq m-2 \,\}.
\end{aligned}
\end{align*}
These sets are finite.

\begin{lemma}
\label{lem:without-brackets-guarded}
Suppose\/ $A_i \in \Typ(\mathcal{B} \cup \{ \Diamond\bm{1} \})$ and\/ $||A_i|| \leq m$ for\/ $i=1,\dots,n+1$.
Then\/ $\vdash_{\mathbf{L}\kleene_{\bm{1}}\Diamond} A_1 \dots A_n \seqarrow A_{n+1}$ only if\/ $\mathcal{S}'_{\mathcal{B},m} \vdash_{\mathrm{Cut}} A_1 \dots A_n \seqarrow A_{n+1}$.
\end{lemma}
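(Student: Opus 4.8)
The plan is to transcribe the proof of Lemma~\ref{lem:without-brackets} almost verbatim, replacing $\mathbf{L}\Diamond$, $\mathbf{L}\Diamond_{\mathrm{m}}$, $\mathcal{S}_{\mathcal{B},m}$, and Theorem~\ref{thm:interpolation} throughout by $\mathbf{L}\kleene_{\bm{1}}\Diamond$, $\mathbf{L}\kleene_{\bm{1}}\Diamond_{\mathrm{m}}$, $\mathcal{S}'_{\mathcal{B},m}$, and Theorem~\ref{thm:interpolation-unit-guarded}, and checking that one never leaves the guarded fragment. Concretely, I argue by induction on $n$. If $n \leq 2$, then $A_1 \dots A_n \seqarrow A_{n+1}$ is itself a member of $\mathcal{S}'_{\mathcal{B},m}$ and we are done. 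If $n \geq 3$, I first pass to a thin indexed sequent $A_1' \dots A_n' \seqarrow A_{n+1}'$, provable in $\mathbf{L}\kleene_{\bm{1}}\Diamond_{\mathrm{m}}$, of which $A_1 \dots A_n \seqarrow A_{n+1}$ is a substitution instance via some $\theta$; this is obtained exactly as in the non-unit case, by reindexing the given cut-free proof with distinct primitive types at distinct initial sequents and distinct indices at distinct modality instances, while leaving every occurrence of $\bm{1}$ and every application of $(\bm{1}{\seqarrow})$ and $({\seqarrow}\bm{1})$ untouched. Since reindexing never moves a $\bm{1}$ out from under the $\Diamond$ that guards it, all of $A_1', \dots, A_{n+1}'$ are again guarded, and thinness is unaffected because $\bm{1}$ contributes nothing to any $\sigma_i$ or $\tau_i$.

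Next I apply the free group interpretation. Writing $u_i = \den{A_i'}$, Lemma~\ref{lem:free-group} (valid also for $\mathbf{L}\kleene_{\bm{1}}\Diamond_{\mathrm{m}}$, with $\den{\bm{1}}$ the identity) gives $u_1 \dots u_n u_{n+1}^{-1} = 1$, and $|u_i| \leq ||A_i'|| \leq m$ for each $i$, using $||\bm{1}|| = 0$. Lemma~\ref{lem:Pentus} then yields either some $k \leq n-1$ with $|u_k u_{k+1}| \leq m$ or else $|u_n u_{n+1}^{-1}| \leq m$. In the first case I let $E'$ be the interpolant supplied by Theorem~\ref{thm:interpolation-unit-guarded} for the partition $(A_k' A_{k+1}';\, A_1' \dots A_{k-1}' \concat \contexthole \concat A_{k+2}' \dots A_n')$; thinness together with conditions (iii)--(iv) forces $\sigma_i(E') \leq 1$ and $\tau_i(E') \leq 1$, so equation~(\ref{eqn:thin-free-group}) gives $||E'|| = |\den{A_k' A_{k+1}'}| = |u_k u_{k+1}| \leq m$, and $E'$ is guarded by Theorem~\ref{thm:interpolation-unit-guarded}. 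Dropping subscripts from the modalities in $E'$ and applying $\theta$ produces a guarded type $E \in \Typ(\mathcal{B} \cup \{ \Diamond\bm{1} \})$ with $||E|| = ||E'|| \leq m$ (its primitive types lie in $\mathcal{B}$ because, by (iii), they already occur in $A_k' A_{k+1}'$). Hence $A_k A_{k+1} \seqarrow E \in \mathcal{S}'_{\mathcal{B},m}$, while $\vdash_{\mathbf{L}\kleene_{\bm{1}}\Diamond} A_1 \dots A_{k-1} E A_{k+2} \dots A_n \seqarrow A_{n+1}$ is a shorter provable sequent all of whose types are guarded of length $\leq m$; the induction hypothesis and one Cut finish this case. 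The second case is identical with the partition $(A_1' \dots A_{n-1}';\, \contexthole \concat A_n')$.

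I do not expect a genuine obstacle: the argument is a line-for-line copy of the proof of Lemma~\ref{lem:without-brackets}, and the only substantive addition is the bookkeeping observation — already encapsulated in Theorem~\ref{thm:interpolation-unit-guarded} — that staying inside the guarded fragment keeps every interpolant guarded, so the induction never escapes the (finite) set of guarded types of length $\leq m$ and $\mathcal{S}'_{\mathcal{B},m}$ remains finite. The one place worth a sentence of care is why equation~(\ref{eqn:thin-free-group}) still holds in the presence of $\bm{1}$: a guarded type records each occurrence of $\Diamond_i\bm{1}$ by two units of length and its denotation $\langle_i \rangle_i$ by two \emph{non-cancelling} free-group generators, so the identity $||E'|| = |\den{E'}|$ for a thin interpolant goes through exactly as in Pentus's original computation.
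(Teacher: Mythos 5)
Your proof is correct and is exactly the argument the paper intends: Lemma~\ref{lem:without-brackets-guarded} is stated without proof precisely because it is the proof of Lemma~\ref{lem:without-brackets} repeated with Theorem~\ref{thm:interpolation-unit-guarded} in place of Theorem~\ref{thm:interpolation}, together with the observations you make (the thin indexed sequent stays guarded, $||\bm{1}||=0$ with $\den{\bm{1}}$ the identity keeps $|\den{A}|\leq||A||$ and equation~(\ref{eqn:thin-free-group}) valid, and guardedness of the interpolant keeps everything inside the finite set $\mathcal{S}'_{\mathcal{B},m}$). Your bookkeeping on thinness, on why no cancellation occurs in $\den{E'}$, and on why $E$ lands in $\Typ(\mathcal{B}\cup\{\Diamond\bm{1}\})$ is accurate, so nothing further is needed.
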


\begin{lemma}
\label{lem:Jaeger-guarded}
Suppose\/ $\vdash_{\mathbf{L}\kleene_{\bm{1}}\Diamond} \Gamma[\langle \Delta \rangle] \seqarrow A_{n+1}$, where\/ $\Delta$ is not the empty hedge and the yield of\/ $\Gamma[\langle \Delta \rangle]$ is\/ $A_1 \dots A_n$ with\/ $A_i \in \Typ(\mathcal{B} \cup \{ \Diamond \bm{1} \})$ and\/ $||A_i|| \leq m$ for\/ $i=1,\dots,n+1$.
Then there is a\/ $B \in \Typ(\mathcal{B} \cup \{ \Diamond \bm{1} \})$ such that\/ $||B|| \leq m-2$ and one of the following holds:
\begin{enumerate}
\item[\upshape (i)]
$\vdash_{\mathbf{L}\kleene_{\bm{1}}\Diamond} \Delta \seqarrow B$ and\/ $\vdash_{\mathbf{L}\kleene_{\bm{1}}\Diamond} \Gamma[\Diamond B] \seqarrow A_{n+1}$.
\item[\upshape (ii)]
$\vdash_{\mathbf{L}\kleene_{\bm{1}}\Diamond} \Delta \seqarrow \varbox B$ and\/ $\vdash_{\mathbf{L}\kleene_{\bm{1}}\Diamond} \Gamma[B] \seqarrow A_{n+1}$.
\end{enumerate}
\end{lemma}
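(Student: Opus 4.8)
The plan is to adapt the proof of Lemma~\ref{lem:Jaeger}. J\"ager obtained that result from his interpolation theorem, and one could route the present argument through Theorem~\ref{thm:interpolation-unit-guarded} in the same way; but the cleanest route — and the one that makes the length bound and the guardedness of $B$ transparent — is a direct induction on the height of a cut-free $\mathbf{L}\kleene_{\bm{1}}\Diamond$ proof of $\Gamma[\langle\Delta\rangle]\seqarrow A_{n+1}$, tracking the displayed bracket pair. The invariant is that every formula occurring in the (sub)proof is a subformula of the original endsequent, hence has length $\le m$; this is what forces $||B||\le m-2$. The base case is the inference that introduces the displayed bracket. If it is an instance of $({\seqarrow}\Diamond)$ with premise $\Delta\seqarrow X$ and conclusion $\langle\Delta\rangle\seqarrow\Diamond X$, then $\Gamma[\contexthole]=\contexthole$ and $A_{n+1}=\Diamond X$; I take $B=X$, note $\vdash\Delta\seqarrow X$ and (trivially) $\vdash\Gamma[\Diamond X]\seqarrow A_{n+1}$, and observe $||X||=||\Diamond X||-2\le m-2$, which is clause~(i). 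If it is an instance of $(\varbox{\seqarrow})$, the content of the displayed bracket is a single type $\varbox A$, the premise is $\Gamma[A]\seqarrow A_{n+1}$, and $B=A$ gives $\vdash\Delta\seqarrow\varbox B$, $\vdash\Gamma[B]\seqarrow A_{n+1}$, and $||A||=||\varbox A||-2\le m-2$, which is clause~(ii).

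For the inductive step one considers the last inference and where its active position lies relative to the displayed bracket. If it lies outside $\langle\Delta\rangle$ (including the case where $\langle\Delta\rangle$ sits inside a side premise) or strictly inside $\Delta$, one applies the induction hypothesis to the smaller premise in which $\langle\Delta\rangle$ still occurs — obtaining $B$ with $\vdash\Delta_0\seqarrow B$ (or $\vdash\Delta_0\seqarrow\varbox B$) together with $\vdash\Gamma_0[\Diamond B]\seqarrow A'_{n+1}$ (resp. $\vdash\Gamma_0[B]\seqarrow A'_{n+1}$) for the appropriate adjacent data — and re-applies exactly the same inference: for a left rule acting inside $\Delta$ one re-applies it inside the derivation of $\Delta_0\seqarrow B$ to recover $\vdash\Delta\seqarrow B$, and for any rule acting in $\Gamma$ or on the succedent one re-applies it to the derivation of $\Gamma_0[\Diamond B]\seqarrow A'_{n+1}$ to recover $\vdash\Gamma[\Diamond B]\seqarrow A_{n+1}$. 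The invariant persists because the premises of a cut-free inference are built from subformulas of its conclusion. The only genuinely new cases relative to J\"ager's setting are $(\bm{1}{\seqarrow})$ — benign, handled just like $(\bullet{\seqarrow})$ — and the degenerate situation in which the content of the displayed bracket has been whittled down to a single empty-bracket tree $\langle\rangle$; there I put $B=\Diamond\bm{1}$, using $\vdash\langle\rangle\seqarrow\Diamond\bm{1}$ and the derivability of $\vdash\Gamma[\Diamond\Diamond\bm{1}]\seqarrow A_{n+1}$ from $\vdash\Gamma[\langle\langle\rangle\rangle]\seqarrow A_{n+1}$ by $(\bm{1}{\seqarrow})$ and $(\Diamond{\seqarrow})$, and a short subformula-property argument (the displayed bracket must here be discharged by $({\seqarrow}\Diamond)$ against a type of $\den{}$-image $\langle\rangle$, whose length is $\ge 2$) shows $m\ge 4$ whenever this case can arise, so again $||B||=2\le m-2$. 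In every case $B$ is a subformula of a type of the endsequent, or is $\Diamond\bm{1}$; in particular $B$ is guarded.

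The step I expect to be the real obstacle is exactly keeping the length bound $||B||\le m-2$ under control through the recursion — checking in each inductive case that $B$ stays pinned to a subformula of the endsequent and does not grow, together with the $\bm{1}$-bookkeeping: verifying that the displayed bracket never has its content whittled all the way down to the empty hedge (which would fall outside the statement), that reaching $\langle\rangle$ really does entail $m\ge 4$, and that guardedness is preserved at each step. Everything else — the case split on which inference acts where, and the re-application of inferences — is routine, and once the lemma is available the guarded analogues of Lemma~\ref{lem:without-brackets-guarded} and of Lemma~\ref{lem:cut-completeness} (the latter proved as before, now with $\mathcal{T}'_{\mathcal{B},m}$ and with the extra axiom $\langle\rangle\seqarrow\Diamond\bm{1}$ absorbing the empty-$\Delta$ case) yield the context-freeness of $\mathbf{L}\kleene\Diamond$.
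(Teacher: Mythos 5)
Your overall strategy coincides with the paper's: induct on the cut-free proof of $\Gamma[\langle\Delta\rangle]\seqarrow A_{n+1}$, split on whether the last inference introduces the displayed bracket (only $({\seqarrow}\Diamond)$ and $(\varbox{\seqarrow})$ can do so), and in the inductive step apply the induction hypothesis to the premise that still contains $\langle\Delta\rangle$ and re-apply the final inference to the appropriate conjunct, using the side premise when the rule has two premises. Where you diverge is the base case: the paper does not take $B$ to be the immediate subformula of the succedent (resp.\ of the type $\varbox A$ inside the bracket), but obtains $B$ by running the interpolation algorithm of Theorems~\ref{thm:interpolation} and~\ref{thm:interpolation-unit-guarded}, so that $\Diamond B$ (resp.\ $\varbox B$) is the interpolant for the partition selecting $\langle\Delta\rangle$; the bound $||B||\le m-2$ then falls out of clauses (iii)--(iv), and---this is the point of the detour---$B\in\Typ(\mathcal{B}\cup\{\Diamond\bm{1}\})$ is exactly what Theorem~\ref{thm:interpolation-unit-guarded} certifies.

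That difference is where your proposal has a genuine gap. Your justification of guardedness, ``$B$ is a subformula of a type of the endsequent, or is $\Diamond\bm{1}$; in particular $B$ is guarded,'' is false as a general principle: $\bm{1}$ is a subformula of the guarded atom $\Diamond\bm{1}$ but is not itself in $\Typ(\mathcal{B}\cup\{\Diamond\bm{1}\})$. It bites precisely in your $({\seqarrow}\Diamond)$ base case when the succedent is $\Diamond\bm{1}$: your recipe returns $B=X=\bm{1}$, violating the conclusion of the lemma (the length bound is not the problem, since $||\bm{1}||=0$). To rescue your version you must either show this configuration cannot occur---i.e., that no nonempty hedge of the relevant shape (leaf types subformulas of guarded types, any naked $\bm{1}$ bracket-enclosed) derives $\bm{1}$, which is true but requires its own tracing argument that you do not give---or replace $B$ there by a guarded type, which is exactly what the paper's appeal to the guarded interpolation theorem accomplishes. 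The same $\bm{1}$-bookkeeping also affects the legitimacy of the inductive step: after, say, $(\Diamond{\seqarrow})$ acting on an occurrence of $\Diamond\bm{1}$, the premise's yield contains a naked $\bm{1}$, so the hypothesis ``$A_i\in\Typ(\mathcal{B}\cup\{\Diamond\bm{1}\})$'' no longer literally holds for the sequent you feed to the induction hypothesis, and the statement being inducted on needs adjusting. You explicitly flag guardedness, the non-emptying of the bracket's content, and the $m\ge 4$ estimate as the ``real obstacles,'' but these are the substance of the lemma's guardedness claim and are left unproved; by contrast, your special $\langle\rangle$ case with $B=\Diamond\bm{1}$ is harmless but unnecessary, since $\langle\rangle$ is a nonempty hedge and the ordinary recursion already covers it.
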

\begin{proof}
Induction on the cut-free proof of $\Gamma[\langle \Delta \rangle] \seqarrow A_{n+1}$.

First, suppose that the displayed occurrences of $\langle$ and $\rangle$ in $\Gamma[\langle \Delta \rangle] \seqarrow A_{n+1}$ are introduced at the last step of the proof.
There are two cases to consider.

\textit{Case 1.}
$\Gamma[\contexthole] = \contexthole$, $A_{n+1} = \Diamond A_{n+1}'$, and $\langle \Delta \rangle \seqarrow \Diamond A_{n+1}'$ is inferred from $\Delta \seqarrow A_{n+1}'$ by $({\seqarrow}\Diamond)$.
Let $B \in \Typ(\mathcal{B} \cup \{\Diamond \bm{1}\})$ be the interpolant for $\Delta \seqarrow A_{n+1}'$ with respect to the partition $(\Delta; \contexthole)$ obtained by the method of Theorems~\ref{thm:interpolation} and~\ref{thm:interpolation-unit-guarded}.
Then the interpolant for $\langle \Delta \rangle \seqarrow \Diamond A_{n+1}'$ with respect to the partition $(\langle \Delta \rangle; \contexthole)$ is $\Diamond B$.
By Theorem~\ref{thm:interpolation-unit-guarded}, condition (i) of the present theorem holds and $||\Diamond B|| \leq m$, which implies $||B|| \leq m-2$.

\textit{Case 2.}
$\Delta = \varbox C$ and $\Gamma[\langle \varbox C \rangle] \seqarrow A_{n+1}$ is inferred from $\Gamma[C] \seqarrow A_{n+1}$ by $(\varbox{\seqarrow})$.
Let $B \in \Typ(\mathcal{B} \cup \{\Diamond \bm{1}\})$ be the interpolant for $\Gamma[C] \seqarrow A_{n+1}$ with respect to the partition $(C; \Gamma[\contexthole])$ obtained by the method of Theorems~\ref{thm:interpolation} and~\ref{thm:interpolation-unit-guarded}.
Then the interpolant for $\Gamma[\langle \varbox C \rangle] \seqarrow A_{n+1}$ with respect to the partition $(\varbox C; \Gamma[\langle \contexthole \rangle])$ is $\varbox B$.
By Theorem~\ref{thm:interpolation-unit-guarded}, condition (ii) of the present theorem holds and $||\varbox B|| \leq m$, which implies $||B|| \leq m-2$.

Now suppose that the displayed occurrences of $\langle$ and $\rangle$ in $\Gamma[\langle \Delta \rangle] \seqarrow A_{n+1}$ are not introduced at the last step of the proof.
The last inference of the proof has one or two premises, one of which must be of the form $\Gamma'[\langle \Delta' \rangle] \seqarrow A_{n+1}'$, where either $\Delta'$ is identical to $\Delta$ or $\Gamma'[\contexthole] \seqarrow A_{n+1}'$ is identical to $\Gamma[\contexthole] \seqarrow A_{n+1}$.
If there is another premise, let that premise be $\Phi \seqarrow C$.
Let $A_1',\dots,A_k'$ be the yield of $\Gamma'[\langle \Delta' \rangle]$.
By the subformula property of cut-free proofs, we must have $A_i' \in \Typ(\mathcal{B} \cup \{ \Diamond \bm{1} \})$ and $||A_i'|| \leq m$ for each $i \in \{1,\dots,k,n+1\}$.
By the induction hypothesis, there is a type $B \in \Typ(\mathcal{B} \cup \{ \Diamond \bm{1} \})$ with $||B|| \leq m-2$ such that one of the following conditions holds:
\begin{enumerate}
\item[($\text{i}'$)]
$\vdash_{\mathbf{L}\kleene_{\bm{1}}\Diamond} \Delta' \seqarrow B$ and $\vdash_{\mathbf{L}\kleene_{\bm{1}}\Diamond} \Gamma'[\Diamond B] \seqarrow A_{n+1}'$.
\item[($\text{ii}'$)]
$\vdash_{\mathbf{L}\kleene_{\bm{1}}\Diamond} \Delta' \seqarrow \varbox B$ and $\vdash_{\mathbf{L}\kleene_{\bm{1}}\Diamond} \Gamma'[B] \seqarrow A_{n+1}'$.
\end{enumerate}
In each case, one of the conjuncts is identical to one of the conjuncts in (i) or (ii).
The other conjunct of (i) or (ii) is obtained from the corresponding conjunct of ($\text{i}'$) or ($\text{ii}'$) by applying the rule of inference used at the last step of the given proof of $\Gamma[\langle \Delta \rangle] \seqarrow A_{n+1}$ (using $\Phi \seqarrow C$ as the other premise if the rule is a two-premise rule).
\qed
\end{proof}

\begin{lemma}
\label{lem:cut-completeness-guarded}
Let\/ $\Gamma \seqarrow A_{n+1}$ be an\/ $\mathbf{L}\kleene_{\bm{1}}\Diamond$ sequent such that the yield of\/ $\Gamma$ is\/ $A_1 \dots A_n$ with\/ $A_i \in \Typ(\mathcal{B} \cup \{ \Diamond \bm{1} \})$ and\/ $||A_i|| \leq m$ for\/ $i=1,\dots,n+1$.
Then\/ $\vdash_{\mathbf{L}\kleene_{\bm{1}}\Diamond} \Gamma \seqarrow A_{n+1}$ if and only if\/ $\mathcal{T}'_{\mathcal{B},m} \vdash_{\mathrm{Cut}} \Gamma \seqarrow A_{n+1}$.
\end{lemma}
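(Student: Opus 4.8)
The proof will essentially transcribe that of Lemma~\ref{lem:cut-completeness}, replacing Lemmas~\ref{lem:without-brackets} and~\ref{lem:Jaeger} by their guarded counterparts (Lemmas~\ref{lem:without-brackets-guarded} and~\ref{lem:Jaeger-guarded}) and inserting one new case to deal with the empty bracket pair $\langle \rangle$, which can now occur in an antecedent. For the ``if'' direction I would observe that every sequent in $\mathcal{T}'_{\mathcal{B},m}$ is provable in $\mathbf{L}\kleene_{\bm{1}}\Diamond$: the members of $\mathcal{S}'_{\mathcal{B},m}$ are provable by definition, $\vdash_{\mathbf{L}\kleene_{\bm{1}}\Diamond} \langle \rangle \seqarrow \Diamond \bm{1}$ by applying $({\seqarrow}\Diamond)$ to the axiom ${\seqarrow} \bm{1}$, and $\vdash_{\mathbf{L}\kleene_{\bm{1}}\Diamond} \langle A \rangle \seqarrow \Diamond A$ and $\vdash_{\mathbf{L}\kleene_{\bm{1}}\Diamond} \langle \varbox A \rangle \seqarrow A$ just as before; since Cut is a rule of $\mathbf{L}\kleene_{\bm{1}}\Diamond$, anything $\mathrm{Cut}$-derivable from $\mathcal{T}'_{\mathcal{B},m}$ is provable.

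For the ``only if'' direction I would induct on the number of occurrences of brackets in $\Gamma$. If $\Gamma$ contains no bracket, then $\Gamma = A_1 \dots A_n$ and Lemma~\ref{lem:without-brackets-guarded} gives $\mathcal{S}'_{\mathcal{B},m} \vdash_{\mathrm{Cut}} \Gamma \seqarrow A_{n+1}$, hence $\mathcal{T}'_{\mathcal{B},m} \vdash_{\mathrm{Cut}} \Gamma \seqarrow A_{n+1}$ since $\mathcal{S}'_{\mathcal{B},m} \subseteq \mathcal{T}'_{\mathcal{B},m}$. Otherwise write $\Gamma = \Gamma'[\langle \Delta \rangle]$ for some bracket pair in $\Gamma$ and split on whether $\Delta$ is empty. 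If $\Delta$ is the empty hedge, then $\Gamma = \Gamma'[\langle \rangle]$, and from $\vdash_{\mathbf{L}\kleene_{\bm{1}}\Diamond} \Gamma'[\langle \rangle] \seqarrow A_{n+1}$ one obtains $\vdash_{\mathbf{L}\kleene_{\bm{1}}\Diamond} \Gamma'[\Diamond \bm{1}] \seqarrow A_{n+1}$ by first applying $(\bm{1}{\seqarrow})$ inside the brackets and then $(\Diamond{\seqarrow})$; this sequent has strictly fewer brackets, and its yield consists of guarded types of length $\leq m$ (the only new leaf, $\Diamond \bm{1}$, has length $2 \leq m$; when $m < 2$ no provable sequent over guarded types has a bracket and this case does not arise), so the induction hypothesis and one Cut against $\langle \rangle \seqarrow \Diamond \bm{1} \in \mathcal{T}'_{\mathcal{B},m}$ give $\mathcal{T}'_{\mathcal{B},m} \vdash_{\mathrm{Cut}} \Gamma'[\langle \rangle] \seqarrow A_{n+1}$. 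If $\Delta$ is nonempty, apply Lemma~\ref{lem:Jaeger-guarded} to get a guarded $B$ with $||B|| \leq m-2$ satisfying (i) or (ii). In case (i), both $\Delta \seqarrow B$ and $\Gamma'[\Diamond B] \seqarrow A_{n+1}$ have strictly fewer brackets than $\Gamma'[\langle \Delta \rangle] \seqarrow A_{n+1}$ (a type contains no brackets), and their yields consist of guarded types of length $\leq m$ (note $||\Diamond B|| = ||B|| + 2 \leq m$), so the induction hypothesis gives $\mathcal{T}'_{\mathcal{B},m}$-$\mathrm{Cut}$-derivations of both; since $||B|| \leq m-2$ the sequent $\langle B \rangle \seqarrow \Diamond B$ lies in $\mathcal{T}'_{\mathcal{B},m}$, and Cutting $\Delta \seqarrow B$ against $\langle B \rangle \seqarrow \Diamond B$ to get $\langle \Delta \rangle \seqarrow \Diamond B$, then against $\Gamma'[\Diamond B] \seqarrow A_{n+1}$, yields $\mathcal{T}'_{\mathcal{B},m} \vdash_{\mathrm{Cut}} \Gamma'[\langle \Delta \rangle] \seqarrow A_{n+1}$. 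Case (ii) is symmetric, using $\langle \varbox B \rangle \seqarrow B \in \mathcal{T}'_{\mathcal{B},m}$.

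\textbf{Main obstacle.} Once Lemmas~\ref{lem:without-brackets-guarded} and~\ref{lem:Jaeger-guarded} are available the argument is routine; the only genuinely new ingredient is the empty-brackets case, which is what forces $\mathcal{T}'_{\mathcal{B},m}$ to include $\langle \rangle \seqarrow \Diamond \bm{1}$ and is the whole reason the ``guarded'' restriction is set up so that $\Diamond \bm{1}$ is an admissible leaf. The points that actually need care are bookkeeping ones: verifying that the induction measure (number of bracket occurrences) strictly decreases in every recursive invocation even when $B$ is inserted back into the context, and checking that every auxiliary sequent used in the Cut derivations---in particular $\langle B \rangle \seqarrow \Diamond B$ and $\langle \varbox B \rangle \seqarrow B$---really belongs to $\mathcal{T}'_{\mathcal{B},m}$, which is precisely guaranteed by the bound $||B|| \leq m-2$ supplied by Lemma~\ref{lem:Jaeger-guarded} together with the finiteness of $\mathcal{T}'_{\mathcal{B},m}$ that the guardedness of $B$ ensures.
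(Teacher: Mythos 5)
Your proposal is correct and follows essentially the same route as the paper: the "if" direction from provability of the members of $\mathcal{T}'_{\mathcal{B},m}$, and the "only if" direction by induction on the number of bracket occurrences, handling the new empty-bracket case by deriving $\Gamma'[\Diamond \bm{1}] \seqarrow A_{n+1}$ via $(\bm{1}{\seqarrow})$ and $(\Diamond{\seqarrow})$ and cutting with $\langle \rangle \seqarrow \Diamond \bm{1}$, with the remaining cases transcribed from Lemma~\ref{lem:cut-completeness} using Lemmas~\ref{lem:without-brackets-guarded} and~\ref{lem:Jaeger-guarded}. Your extra bookkeeping (e.g.\ noting $||\Diamond\bm{1}||=2\leq m$ whenever brackets can occur at all) is a minor refinement of what the paper leaves implicit.
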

\begin{proof}
As before, the ``if'' direction is easy and the ``only if'' direction is by induction on the number of occurrences of brackets in $\Gamma$.
If $\Gamma = \Gamma'[\langle \rangle]$, then $\Gamma'[\langle \rangle] \seqarrow A_{n+1}$ is derivable from $\Gamma'[\Diamond \bm{1}] \seqarrow A_{n+1}$ and $\langle \rangle \seqarrow \Diamond \bm{1}$ by Cut.
Since by assumption $\vdash_{\mathbf{L}\kleene_{\bm{1}}\Diamond} \Gamma'[\langle \rangle] \seqarrow A_{n+1}$, we get $\vdash_{\mathbf{L}\kleene_{\bm{1}}\Diamond} \Gamma'[\Diamond \bm{1}] \seqarrow A_{n+1}$ using $(\bm{1}{\seqarrow})$ and $(\Diamond{\seqarrow})$.
By induction hypothesis, $\mathcal{T}'_{\mathcal{B},m} \vdash_{\mathrm{Cut}} \Gamma'[\Diamond \bm{1}] \seqarrow A_{n+1}$.
Since $\langle \rangle \seqarrow \Diamond \bm{1}$ is in $\mathcal{T}'_{\mathcal{B},m}$, it follows that $\mathcal{T}'_{\mathcal{B},m} \vdash_{\mathrm{Cut}} \Gamma'[\langle \rangle] \seqarrow A_{n+1}$.
The remaining cases are handled exactly as before.
\qed
\end{proof}

\begin{theorem}
\label{thm:L*Diamond-context-free}
Every language recognized by\/ $\mathbf{L}\kleene\Diamond$ is context-free.
\end{theorem}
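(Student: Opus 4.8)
The plan is to run the argument of Theorem~\ref{thm:LDiamond-context-free} essentially verbatim, with $\mathcal{S}_{\mathcal{B},m}$, $\mathcal{T}_{\mathcal{B},m}$, and Lemma~\ref{lem:cut-completeness} replaced by the guarded versions $\mathcal{S}'_{\mathcal{B},m}$, $\mathcal{T}'_{\mathcal{B},m}$, and Lemma~\ref{lem:cut-completeness-guarded}, together with one extra preliminary: a \emph{conservativity} observation linking $\mathbf{L}\kleene\Diamond$ (in terms of which grammars are defined) to $\mathbf{L}\kleene_{\bm{1}}\Diamond$ (in terms of which Lemma~\ref{lem:cut-completeness-guarded} is phrased). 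Concretely, if a sequent $\Gamma \seqarrow C$ contains no occurrence of $\bm{1}$, then $\vdash_{\mathbf{L}\kleene\Diamond} \Gamma \seqarrow C$ if and only if $\vdash_{\mathbf{L}\kleene_{\bm{1}}\Diamond} \Gamma \seqarrow C$: left to right is trivial since $\mathbf{L}\kleene\Diamond$ is a subsystem of $\mathbf{L}\kleene_{\bm{1}}\Diamond$, and for the converse one takes a cut-free $\mathbf{L}\kleene_{\bm{1}}\Diamond$ proof (cut elimination holds) and notes that, by the subformula property, every type in it is a subtype of a type of the $\bm{1}$-free end-sequent, hence $\bm{1}$-free, so the rules $(\bm{1}{\seqarrow})$ and $({\seqarrow}\bm{1})$ are never applied and the proof is already an $\mathbf{L}\kleene\Diamond$ proof.

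Given an $\mathbf{L}\kleene\Diamond$ grammar $G = (\Sigma, I, D)$, let $\mathcal{B}$ be the finite set of primitive types occurring in $G$ and let $m = \max(\{\, ||A|| \mid (a,A) \in I \,\} \cup \{ ||D|| \})$, exactly as in the proof of Theorem~\ref{thm:LDiamond-context-free}. Since a guarded type of length at most $m$ contains at most $\lfloor m/2 \rfloor$ occurrences of $\bm{1}$, the set $N = \{\, A \in \Typ(\mathcal{B} \cup \{ \Diamond \bm{1} \}) \mid ||A|| \leq m \,\}$ is finite. Define a context-free grammar $G' = (N, \Sigma, P, D)$ whose productions are: $A_{n+1} \rightarrow A_1 \dots A_n$ for each sequent $A_1 \dots A_n \seqarrow A_{n+1}$ in $\mathcal{S}'_{\mathcal{B},m}$ (including $A_{n+1} \rightarrow \epsilon$ when the antecedent is empty); $\Diamond \bm{1} \rightarrow \epsilon$; $\Diamond A \rightarrow A$ and $A \rightarrow \varbox A$ for every $A \in \Typ(\mathcal{B} \cup \{ \Diamond \bm{1} \})$ with $||A|| \leq m-2$; and $A \rightarrow a$ for every $(a, A) \in I$. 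The new production $\Diamond \bm{1} \rightarrow \epsilon$ mirrors the sequent $\langle \rangle \seqarrow \Diamond \bm{1} \in \mathcal{T}'_{\mathcal{B},m}$ and is what accounts for empty bracket pairs in antecedents, which $\mathbf{L}\kleene\Diamond$ (unlike $\mathbf{L}\Diamond$) admits.

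It then remains to show that $G$ and $G'$ generate the same language, and for this it suffices, as in the proof of Theorem~\ref{thm:LDiamond-context-free}, to prove that for guarded types $A_1, \dots, A_{n+1}$ of length at most $m$ there is a type hedge $\Gamma$ with yield $A_1 \dots A_n$ such that $\vdash_{\mathbf{L}\kleene\Diamond} \Gamma \seqarrow A_{n+1}$ if and only if $A_{n+1} \Rightarrow_{G'}\kleene A_1 \dots A_n$. By the conservativity observation (note that such a $\Gamma$ and all the $A_i$ are in fact $\bm{1}$-free, hence a fortiori guarded), the left-hand side is equivalent to the same statement with $\vdash_{\mathbf{L}\kleene_{\bm{1}}\Diamond}$ in place of $\vdash_{\mathbf{L}\kleene\Diamond}$, which by Lemma~\ref{lem:cut-completeness-guarded} is equivalent to $\mathcal{T}'_{\mathcal{B},m} \vdash_{\mathrm{Cut}} \Gamma \seqarrow A_{n+1}$ for some such $\Gamma$; and this last condition is equivalent to $A_{n+1} \Rightarrow_{G'}\kleene A_1 \dots A_n$ by two straightforward inductions, on the number of applications of Cut and on the length of the $G'$-derivation respectively, exactly as in Theorem~\ref{thm:LDiamond-context-free}, the one new point being the correspondence between the sequent $\langle \rangle \seqarrow \Diamond \bm{1}$ and the production $\Diamond \bm{1} \rightarrow \epsilon$.

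Given Theorem~\ref{thm:interpolation-unit-guarded} and Lemmas~\ref{lem:without-brackets-guarded}--\ref{lem:cut-completeness-guarded}, which already carry the conceptual weight (the unit is indispensable for interpolation, but restricting it to guarded occurrences keeps the nonterminal set finite), the present theorem is largely assembly. I expect the only real pitfalls to be the conservativity observation and the bookkeeping around empty bracket pairs — making sure the production $\Diamond \bm{1} \rightarrow \epsilon$ is introduced and used consistently on both sides of the translation between Cut-derivations from $\mathcal{T}'_{\mathcal{B},m}$ and derivations in $G'$; this, rather than any substantive difficulty, is the step I would be most careful about.
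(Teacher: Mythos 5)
Your proposal is correct and follows essentially the same route as the paper: the same guarded-type grammar $G'$ with the extra production $\Diamond\bm{1} \rightarrow \epsilon$, the equivalence via Lemma~\ref{lem:cut-completeness-guarded} and the two routine inductions, and the conservativity of $\mathbf{L}\kleene_{\bm{1}}\Diamond$ over $\mathbf{L}\kleene\Diamond$ on $\bm{1}$-free sequents (which the paper also invokes, citing cut elimination, and which your subformula-property argument justifies). No gaps worth noting.
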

\begin{proof}
Let $G = (\Sigma,I,D)$ be an $\mathbf{L}\kleene\Diamond$ grammar and define $\mathcal{B}$ and $m$ as in the proof of Theorem~\ref{thm:LDiamond-context-free}.
The definition of the context-free grammar $G' = (N,\Sigma,P,D)$ equivalent to $G$ is modified from the proof of Theorem~\ref{thm:LDiamond-context-free} as follows:
\begin{align*}
N & = \{\, A \in \Typ(\mathcal{B} \cup \{ \Diamond \bm{1} \}) \mid ||A|| \leq m \,\},\\
P & = 
\begin{aligned}[t]
& \{\, A_{n+1} \rightarrow A_1 \dots A_n \mid \text{$A_1 \dots A_n \seqarrow A_{n+1}$ is in $\mathcal{S}'_{\mathcal{B},m}$} \,\} \cup {}\\
& \{ \Diamond \bm{1} \rightarrow \epsilon \} \cup {}\\
& \{\, \Diamond A \rightarrow A \mid \text{$A \in \Typ(\mathcal{B} \cup \{ \Diamond \bm{1} \})$ and $||A|| \leq m-2$} \,\} \cup {}\\
& \{\, A \rightarrow \varbox A \mid \text{$A \in \Typ(\mathcal{B} \cup \{ \Diamond \bm{1})$ and $||A|| \leq m-2$} \,\} \cup {}\\
& \{\, A \rightarrow a \mid (a,A) \in I \,\}.
\end{aligned}
\end{align*}
Using Lemma~\ref{lem:cut-completeness-guarded}, we can prove that whenever $A_i \in \Typ(\mathcal{B} \cup \{ \Diamond \bm{1} \})$ and $||A_i|| \leq m$ for $i=1,\dots,n+1$, the following are equivalent:
\begin{enumerate}
\item[(i)]
$\vdash_{\mathbf{L}\kleene_{\bm{1}}\Diamond} \Gamma \seqarrow A_{n+1}$ for some $\Gamma$ whose yield is $A_1 \dots A_n$.
\item[(ii)]
$A_{n+1} \Rightarrow\kleene_{G'} A_1 \dots A_n$.
\item[(i$'$)]
$\mathcal{T}'_{\mathcal{B},m} \vdash_{\mathrm{Cut}} \Gamma \seqarrow A_{n+1}$ for some $\Gamma$ whose yield is $A_1 \dots A_n$.
\end{enumerate}
Since cut elimination holds of $\mathbf{L}\kleene_{\bm{1}}\Diamond$, when $\bm{1}$ does not occur in $\Gamma \seqarrow D$, we have $\vdash_{\mathbf{L}\kleene\Diamond} \Gamma \seqarrow D$ if and only if $\vdash_{\mathbf{L}\kleene_{\bm{1}}\Diamond} \Gamma \seqarrow D$.
This implies that $G$ and $G'$ are equivalent.
\qed
\end{proof}

\section{Conclusion}

We have shown that the calculi $\mathbf{L}\Diamond$ and $\mathbf{L}\kleene\Diamond$ both recognize only context-free languages.
The necessary ingredients of the proof were all available from Pentus's and J\"ager's work \citep{Pentus:1993,Pentus:1997,Jaeger:2003}.
Clearly, the same proof works for the multimodal generalizations of the calculi, $\mathbf{L}\Diamond_{\mathrm{m}}$ and $\mathbf{L}\kleene\Diamond_{\mathrm{m}}$.
The question of the recognizing power of the calculi with the unit, $\mathbf{L}\kleene_{\bm{1}}\Diamond$ and $\mathbf{L}\kleene_{\bm{1}}\Diamond_{\mathrm{m}}$, is left open.

\bibliographystyle{spbasic}
\bibliography{ref}

\end{document}